\newtheorem{theorem}{Theorem}[section] 
\newtheorem{claim}[theorem]{Claim}
\newtheorem{lemma}[theorem]{Lemma} 
\newtheorem{conclusion}[theorem]{Conclusion}
\newtheorem{observation}[theorem]{Observation}
\theoremstyle{definition}
\newtheorem{definition}[theorem]{Definition}
\newtheorem{discussion}[theorem]{Discussion}
\theoremstyle{remark}
\newtheorem{remark}[theorem]{Remark}
\newtheorem{question}[theorem]{Question}
\newtheorem{notation}[theorem]{Notation}
\newcommand{\pp}{{\rm pp}}
\newcommand{\BB}{{\rm BB}}
\newcommand{\nst}{{\rm nst}}
\newcommand{\pcf}{{\rm pcf}}
\newcommand{\Sol}{{\rm Sol}}
\newcommand{\Sep}{{\rm Sep}}
\newcommand{\Reg}{{\rm Reg}}
\newcommand{\OBB}{{\rm OBB}}
\newcommand{\otp}{{\rm otp}}
\newcommand{\bd}{{\rm bd}}
\newcommand{\tcf}{{\rm tcf}}
\newcommand{\Dom}{{\rm Dom}}
\newcommand{\rest}{{\restriction}}
\newcommand{\then}{{\underline{then}}}
\newcommand{\when}{{\underline{when}}}
\newcommand{\Then}{{\underline{Then}}}
\newcommand{\mn}{{\medskip\noindent}}
\newcommand{\sn}{{\smallskip\noindent}}
\newcommand{\cA}{{\mathscr A}}
\newcommand{\gD}{{\mathfrak D}}
\newcommand{\cD}{{\mathscr D}}
\newcommand{\gd}{{\mathfrak d\/}}
\newcommand{\cF}{{\mathscr F}}
\newcommand{\cP}{{\mathscr P}}
\newcommand{\bbZ}{{\mathbb Z}}
\newcommand{\scr}{{\mathscr S}}
\newcommand{\gt}{{\mathfrak t}} 
\newcommand{\cU}{{\mathscr U}}
\newcommand{\cW}{{\mathscr W}}
\newcommand{\cf}{{\rm cf}}
\def\mathunderaccent#1#2 {\let\theaccent#1\skewfactor#2
\mathpalette\putaccentunder}
\def\putaccentunder#1#2{\oalign{$#1#2$\crcr\hidewidth
\vbox to.2ex{\hbox{$#1\skew\skewfactor\theaccent{}$}\vss}\hidewidth}}
\newenvironment{PROOF}[2][\proofname.]
   {\begin{proof}[#1]}
   {\end{proof}}
\begin{document}

\title {Ordered black boxes: existence}
\author {Saharon Shelah}
\address{Einstein Institute of Mathematics\\
Edmond J. Safra Campus, Givat Ram\\
The Hebrew University of Jerusalem\\
Jerusalem, 91904, Israel\\
 and \\
 Department of Mathematics\\
 Hill Center - Busch Campus \\ 
 Rutgers, The State University of New Jersey \\
 110 Frelinghuysen Road \\
 Piscataway, NJ 08854-8019 USA}
\email{shelah@math.huji.ac.il}
\urladdr{http://shelah.logic.at}
\thanks{The author would like to thank the Israel Science Foundation
for partial support of this research (Grant No. 1053/11).
The author thanks Alice Leonhardt for the beautiful typing.
  First typed April 30, 2012 when moved from \cite{Sh:898} which was
  first typed Dec. 6, 2006.}



\subjclass[2010]{Primary: 03E04,03E75; Secondary: 20K20,20K30}

\keywords {set theory, combinatorial set theory, black boxes, Abelian groups}

\date{December 16, 2012}

\begin{abstract}
We defined ordered black boxes in which for a partial $J$ we try to
predict just a bound in $J$ to a function restricted to $C_\alpha$.
The existence results are closely related to pcf, propagating
downward.  We can start with trivial cases.
\end{abstract}

\maketitle
\numberwithin{equation}{section}
\setcounter{section}{-1}
\newpage

\section{Introduction}

We defined the so-called ordered black box, we use pcf to get many
cases of it by downward induction.  The starting point is via $\lambda
= \cf(2^\mu),\mu \in \bold C_\kappa$.  Via such black boxes we get
sometime cases of black boxes which are enough, e.g. for the ``TDC =
the trivial dual conjecture".

This was part of \cite{Sh:898}, but in the end the referee requested it
to be moved out as it was not used.

We use some definitions from \cite{Sh:898}.
\newpage

\section{Propagating OBB$_\sigma(\bar C)$ down by pcf}

We deal here with the ordered black box, OBB and prove in ZFC that
many cases occur.
\begin{definition}
\label{j.35}
1) For a partial order $J$, a sequence $\bar
C = \langle C_\delta:\delta \in S\rangle$ such that $C_\delta
\subseteq \delta$ for every $\delta \in S$ and an ideal $I$ on $S$, let
$\OBB_J(\bar C,I)$ mean that there exists a sequence $\langle
t_\delta:\delta \in S \rangle$ with $t_\delta \in J$ 
such that: if $f:\bigcup\limits_{\delta \in S} C_\delta
\rightarrow J$, \then \, $\{\delta \in S:(\forall \alpha \in
C_\delta)(f(\alpha) \le_J t_\delta)\} \ne \emptyset \mod I$.

\noindent
2) If $\sup(S) = \sup(\bigcup\limits_{\delta \in S} C_\delta)$
is a regular uncountable cardinal and $I$ is the
 non-stationary ideal on $\lambda$ restricted to $S$, \then \, we may omit $I$.

\noindent
3) If $J = (\theta,<)$, we may write $\theta$ instead of $J$.

\noindent
4) $\OBB^+_J(\bar C,I)$ is defined as in part (1) but we demand $\{\delta
\in S:(\forall \alpha \in C_\delta)(f(\alpha) \le_J t_\delta)\} = S
\mod I$.
\end{definition}

\begin{remark}
Note that we can use only $I,J$ and $\lambda(\bar C)$, see below 
which is a regular cardinal.
\end{remark}

\begin{notation}
\label{j.41}
Let $\bar C = \langle C_\delta:\delta \in S \rangle,
S = S(\bar C)$ and $\Dom(\bar C) := \cup\{C_\delta:\delta \in S\}$.

\noindent
1) We may write
$\kappa = \kappa(\bar C)$ when $\delta \in S \Rightarrow 
\otp(C_\delta) = \kappa$.

\noindent
2) We may write
$\lambda = \lambda(\bar C)$ when $S \subseteq \lambda$ is a
stationary subset of the regular uncountable cardinal $\lambda$ 
and $[\delta \in S \Rightarrow C_\delta \subseteq \delta = \sup(C_\delta)]$.

\noindent
3) We may write $\mu = \mu(\bar C)$ 
when $\mu = \sup \cup \{C_\delta:\delta \in S\}$ is $< |S(\bar C)|$.

\noindent
4) We say $\bar C$ is tree-like when $\alpha \in C_{\delta_1} \cap
 C_{\delta_2} \Rightarrow C_{\delta_1} \cap \alpha = C_{\delta_2}
\cap \alpha$.
\end{notation}

\noindent
A (trivial) starting point is
\begin{observation}
\label{j.44}
1) If $\lambda = \lambda(\bar C)$ is
(well-defined and) regular uncountable and $I$ is the non-stationary
ideal restricted to $S$ (which is a stationary subset of $\lambda$),
see Definition \ref{j.41}(2), \then \,
$\OBB_\lambda(\bar C,I)$ and moreover $\OBB^+_\lambda(\bar C,I)$.

\noindent
2) If $\OBB_J(\bar C,I)$ as exemplified by $\bar t = \langle
t_\delta:\delta \in S\rangle$ and $\theta = \cf(\theta)$ and
$J$ is a $\theta$-directed partial order and $I^*_{\bar t} := \{\cup\{A_i:i
  < i(*)\}:i(*) < \theta$ and for each $i < \theta$ for some
  $f_i:\cup\{C_\delta:\delta \in S\} \rightarrow J$ we have $A_i
 = \{\delta \in S:\neg(\forall \alpha \in C_\delta)(f_i(\alpha)
   \le_J t_\delta)\}\}$ \then \, $\OBB^+_J(\bar C,I^*_{\bar t}))$
and $I^*_{\bar t}$ is a $\theta$-complete ideal on $\lambda(\bar C)$.

\noindent
3) If $\OBB^+_J(\bar C,I)$ and $S_1 \in I^+$ then $\OBB^+_J(\bar C,I +
(S \backslash S_1))$, in fact for $J,S,\bar C,I$ as in \ref{j.35},
$\OBB^+_J(\bar C,I)$ iff $\OBB_J(\bar C,I + (S \backslash S_1))$ for
 every $S_1 \in I^+$.
\end{observation}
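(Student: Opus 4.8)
The plan is to write down the predicting sequence $\bar t$ explicitly in each part. Throughout, for $\bar t=\langle t_\delta:\delta\in S\rangle$ with $t_\delta\in J$ and $f:\Dom(\bar C)\to J$, write
\[
A^{\bar t}_f:=\{\delta\in S:\neg(\forall\alpha\in C_\delta)(f(\alpha)\le_J t_\delta)\},
\]
so that $\bar t$ exemplifies $\OBB_J(\bar C,I)$ iff $S\setminus A^{\bar t}_f\notin I$ for all $f$, and exemplifies $\OBB^+_J(\bar C,I)$ iff $A^{\bar t}_f\in I$ for all $f$. For part (1) I would take $t_\delta=\delta$. Given $f$, extend it to $\hat f:\lambda\to\lambda$ and let $E$ be the club of ordinals $<\lambda$ closed under $\hat f$ (here regularity of $\lambda$ is used). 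For $\delta\in S\cap E$ and $\alpha\in C_\delta\subseteq\delta$ we get $f(\alpha)=\hat f(\alpha)<\delta=t_\delta$, so $\delta\notin A^{\bar t}_f$; hence $A^{\bar t}_f$ is disjoint from the club $E$, i.e.\ non-stationary, i.e.\ in $I$. Thus this $\bar t$ exemplifies $\OBB^+_\lambda(\bar C,I)$, a fortiori $\OBB_\lambda(\bar C,I)$.

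For part (2), $\bar t$ is the given witness of $\OBB_J(\bar C,I)$. Reading the definition of $I^*_{\bar t}$ up to subsets it is downward closed, and since $\theta$ is regular a union of $<\theta$ of its members (each a union of $<\theta$ sets $A^{\bar t}_f$) is again such a union, so $I^*_{\bar t}$ is $\theta$-complete; one views it as an ideal on $\lambda(\bar C)$ by setting $B\in I^*_{\bar t}$ iff $B\cap S\in I^*_{\bar t}$. The only real point is $S\notin I^*_{\bar t}$: if $S=\bigcup_{i<i(*)}A^{\bar t}_{f_i}$ with $i(*)<\theta$, use $\theta$-directedness of $J$ to pick $g$ with $g(\alpha)\ge_J f_i(\alpha)$ for all $i<i(*)$ and $\alpha\in\Dom(\bar C)$; since $\bar t$ exemplifies $\OBB_J(\bar C,I)$ the set $S\setminus A^{\bar t}_g$ is nonempty, and any $\delta$ in it has $f_i(\alpha)\le_J g(\alpha)\le_J t_\delta$ for every $\alpha\in C_\delta$, hence $\delta\notin\bigcup_i A^{\bar t}_{f_i}$ — a contradiction. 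As $A^{\bar t}_f\in I^*_{\bar t}$ trivially for every $f$, the same $\bar t$ exemplifies $\OBB^+_J(\bar C,I^*_{\bar t})$.

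For part (3): if $\bar t$ exemplifies $\OBB^+_J(\bar C,I)$ then every $A^{\bar t}_f\in I\subseteq I+(S\setminus S_1)$, and $I+(S\setminus S_1)$ is proper because $S_1\in I^+$ forces $S_1\notin I+(S\setminus S_1)$; so $\bar t$ exemplifies $\OBB^+_J(\bar C,I+(S\setminus S_1))$, and moreover $S_1\setminus A^{\bar t}_f=S_1\bmod I$ is $I$-positive for every $f$, which is precisely $\OBB_J(\bar C,I+(S\setminus S_1))$ witnessed by $\bar t$ — this gives the first assertion and the forward direction of the ``iff''. For the converse, assume $\OBB_J(\bar C,I+(S\setminus S_1))$ for every $S_1\in I^+$ and fix witnesses $\bar t^{S_1}$, so $S_1\setminus A^{\bar t^{S_1}}_f\in I^+$ for all $f$. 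Amalgamate them: for $\delta\in S$ let $t_\delta\in J$ be an upper bound of $\{t^{S_1}_\delta:S_1\in I^+,\ \delta\in S_1\}$. Since $t_\delta\ge_J t^{S_1}_\delta$ whenever $\delta\in S_1$, transitivity of $\le_J$ gives $A^{\bar t}_f\cap S_1\subseteq A^{\bar t^{S_1}}_f\cap S_1$ for every $f$ and every $S_1\in I^+$; if $A^{\bar t}_f\notin I$ for some $f$, applying this with $S_1:=A^{\bar t}_f$ yields $S_1\subseteq A^{\bar t^{S_1}}_f$, contradicting $S_1\setminus A^{\bar t^{S_1}}_f\in I^+$. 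Hence $A^{\bar t}_f\in I$ for all $f$, i.e.\ $\bar t$ exemplifies $\OBB^+_J(\bar C,I)$.

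Parts (1), (2) and the first half of (3) are bookkeeping with the definitions; the one step that genuinely needs care — and the main obstacle — is the amalgamation in the converse of (3), which presumes the upper bounds $t_\delta$ exist in $J$. This is automatic when $J$ is sufficiently directed (e.g.\ $\chi$-directed with $\chi>2^{|S|}$), which is the setting of the pcf-type applications these observations are made for; for $J$ merely a partial order one would instead amalgamate along a transfinite exhaustion of $S$ by an antichain of $I$-positive sets, and I expect the verification that this produces $\OBB^+$ (rather than only $\OBB$) on each piece to be the real content, presumably again forcing some directedness hypothesis on $J$.
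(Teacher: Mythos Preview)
Your proofs of parts (1), (2), and the forward direction of (3) are correct and match the paper's argument essentially verbatim (the paper even has a small typo in part (2), writing ``$A\subseteq A_i$'' where ``$A\cap A_i=\emptyset$'' is meant; your version is cleaner).

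For the converse of the ``iff'' in (3) you are working too hard, and in the wrong direction. The intended argument is at the level of a \emph{fixed} witness: a given $\bar t$ witnesses $\OBB^+_J(\bar C,I)$ iff the \emph{same} $\bar t$ witnesses $\OBB_J(\bar C,I+(S\setminus S_1))$ for every $S_1\in I^+$. You already have the forward direction. For the converse, suppose $\bar t$ fails $\OBB^+_J(\bar C,I)$, so $A^{\bar t}_f\in I^+$ for some $f$; set $S_1:=A^{\bar t}_f$. Then $S_1\setminus A^{\bar t}_f=\emptyset\in I$, so $\bar t$ fails to witness $\OBB_J(\bar C,I+(S\setminus S_1))$ for this particular $S_1\in I^+$. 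That is the whole content of the paper's ``Easy''.

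Your amalgamation argument addresses the literal existential reading (a possibly different witness $\bar t^{S_1}$ for each $S_1$), and you are right that this version would require a quantifier swap that does not come for free --- your directedness hypothesis is one way to force it, but it is not what the paper has in mind. So the ``main obstacle'' you identify is an artifact of reading the statement more strongly than intended; no amalgamation or antichain exhaustion is needed.
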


\begin{PROOF}{\ref{j.44}}
1) Let $S = S(\bar C)$.  We define $\bar t = \langle
t_\delta:\delta \in S\rangle$ by letting $t_\delta = \delta$.

Now let $f:\cup\{C_\delta:\delta \in S\} \rightarrow \lambda$ but
 $\cup\{C_\delta:\delta \in S\} \subseteq \lambda$ so let $f^+
\supseteq f$ be such that
$f^+:\lambda \rightarrow \lambda$ (we usually below use
just any such $f^+$).

So $E = \{\delta < \lambda$ \,: for every $\alpha < \delta$ we have
$f^+(\alpha) < \delta\}$ is a club of $\lambda$ and for every 
$\delta \in S \cap E$
we have $\alpha \in C_\delta \Rightarrow \alpha < \delta \Rightarrow
f(\alpha) < \delta$.

As $S \cap E$ is stationary, we are done.

\noindent
2) Obviously, $I^*_{\bar t} \subseteq \cP(S)$ is closed under subsets, and as
   $\theta$ is infinite regular, clearly $I^*_{\bar t}$ 
is closed under union of $< \theta$ members, 
hence $\OBB^+_J(\bar C,I^*_{\bar t})$ holds by the
   definition \ref{j.35} provided that we show $S \notin I^*_{\bar
   t}$.  So assume $i(*) < \theta,f_i:\cup\{C_\delta:\delta \in S\}
   \rightarrow J$ and $A_i := \{\delta \in S \neg (\forall \alpha \in
   C_\alpha)(f_i(\alpha) \le_J t_\delta)\}$ and we should prove $S \ne
   \cup\{A_i:i < i(*)\}$.  Choose $f:\cup\{C_\delta:\delta \in S\}
   \rightarrow J$ by: $f(\alpha)$ is any $\le_J$-upper bound of
   $\{f_i(\alpha):i < i(*)\}$, exists as $J$ is $\theta$-directed.
Let $A := \{\delta \in S:(\forall \alpha \in
   C_\delta)(f(\alpha)) \le t_\delta\}$, so $A \in I^+$ as 
$\OBB_J(\bar C,I)$ is assumed, so necessarily $A \ne \emptyset$ by
the definition of $A$; as clearly $i<i(*) \Rightarrow A \subseteq
A_i$, so we are done.

\noindent
3) Easy.  
\end{PROOF}

\noindent
The main case in Lemma \ref{j.45} below is: 
each $J_i$ is a regular cardinal $> i^*$.
\begin{lemma}
\label{j.45}
If clauses (a)-(f) below hold, \then \, for
some ${\cU} \in I^+_*$, for every $i \in {\cU}$, we have
$\OBB_{J_i}(\bar C,I)$, where:
\mn
\begin{enumerate}
\item[$(a)$]  $\OBB_J(\bar C,I)$ such that
$S = S(\bar C),\kappa = \kappa(\bar C)$ or just $\delta \in S
\Rightarrow \otp(C_\delta) \le \kappa$
\sn
\item[$(b)$]   $I$ is an $|i^*|^+$-complete ideal on $S$
\sn
\item[$(c)$]  $\bar J = \langle J_i:i < i^*\rangle$
\sn
 \item[$(d)$]  $J_i$ is a partial order such that $J_i \models
(\forall s)(\exists t)(s < t)$
\sn
\item[$(e)$]  $I_*$ is an ideal on $i^*$
\sn
\item[$(f)$]  $(\alpha) \quad J$ is a partial order
\sn
\item[${{}}$]  $(\beta) \quad \bar g = \langle g_t:t \in J\rangle$
\sn
\item[${{}}$]  $(\gamma) \quad g_t \in  \prod_{i < i^*} J_i$ for $t
\in J$
\sn
\item[${{}}$]  $(\delta) \quad \{g_t:t \in J\}$ is cofinal in 
$J_* = (\prod\limits_{i <i^*} J_i,\le_{I_*})$, where the partial 

\hskip25pt order $\le_{I_*}$ on $\prod\limits_{i < i^*} J_i$ 
is defined by $g' \le_{I_*} g''$ iff 

\hskip25pt $\{i < i^*:g'(i) \le_{J_i} g'(i)\} = i^* \mod I_*$
\sn
\item[${{}}$]  $(\varepsilon) \quad \bar g$ is $\le_{I_*}$-increasing,
i.e., $s \le_J t \Rightarrow g_s \le_{I_*} g_t$,
\item[$(g)$]  one of the following\footnote{we may label this
$(f)(\zeta)$ rather than (g), but as it is 
much bigger we prefer the present form}  possibilities holds:
\mn
\begin{enumerate}
\item[${{}}$]  \underline{Pos (A)}:  $(\alpha) \quad 
I_*$ is $\kappa^+$-complete or just
\sn
\item[${{}}$]  \hskip45pt $(\alpha)' \quad I_*$ is $|C_\delta|^+$-complete for
every $\delta \in S$
\sn
\item[${{}}$]  \underline{Pos (B)}: $(\alpha) \quad \bar C$ is
tree-like and $\otp(C_\delta) = \kappa$ for $\delta \in S$
\sn
\item[${{}}$]  \hskip20pt $(\beta) \quad J_i$ is $\kappa$-directed
\sn
\item[${{}}$]  \hskip20pt $(\gamma) \quad$ if ${\cU}_\varepsilon 
\in I_*$ is $\subseteq$-increasing for $\varepsilon < \kappa$ then 

\hskip45pt $i^* \backslash \cup\{{\cU}_\varepsilon:
\varepsilon < \kappa\} \notin I_*$
\sn
\item[${{}}$]  \underline{Pos (C)}:  there is $\bold
F:\prod\limits_{i<i^*} J_i \rightarrow J$ satisfying $f \le_{I_*} 
g_{\bold F(f)}$ for every 

\hskip45pt  $f \in \prod\limits_{i < i^*} J_i$
 and if $f_\varepsilon \in \prod\limits_{i<i^*} J_i$ and 
$\bold F(f_\varepsilon) \le_J t$ for $\varepsilon < \kappa$,

\hskip45pt  then 
$\{i < i^*:f_\varepsilon(i) \le_{J_i} g_t(i)$ for every $\varepsilon <
\kappa\} \ne \emptyset \mod I_*$
\sn
\item[${{}}$]  \underline{Pos (D)}:  Clauses $(\alpha),(\beta)$ as
in Pos $(B)$, and
\sn
\item[${{}}$]  \hskip20pt $(\gamma) \quad$ if $r_\zeta \le_J t$ for
$\zeta < \kappa$, then $\{i < i^*:g_{r_\varepsilon}(i)
\le_{J_i} g_t(i)$ for every

\hskip45pt  $\varepsilon < \kappa\} \ne \emptyset \mod I_*$
\item[${{}}$]  \underline{Pos (E)}:  Clause $(\alpha)$ as in Pos (B)
and
\sn
\item[${{}}$]  \hskip20pt $(\beta) \quad 
\bold F_\varepsilon:{}^{\varepsilon +1}(\prod\limits_{i <i^*} J_i)
\rightarrow J$ for $\varepsilon < \kappa$ such that if $f_\zeta \in
\prod\limits_{i < i^*} J_i$ and

\hskip45pt  $t_\zeta = \bold F_\zeta(\langle
f_\varepsilon:\varepsilon \le \zeta\rangle)$ and $t_\zeta \le_J t$ for
$\zeta < \kappa$ \then \, 

\hskip45pt $\{i < i^*:(\forall \varepsilon <
\kappa)(f_\varepsilon(i) \le_{J_i} g_t(i)\} \ne \emptyset \mod I_*$

\hskip45pt and $f_\zeta \le_{J_*} g_{t_\zeta}$.
\end{enumerate}
\end{enumerate}
\end{lemma}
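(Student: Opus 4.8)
The plan is to show that the ``coordinatewise pullback'' of a witness for $\OBB_J(\bar C,I)$ witnesses $\OBB_{J_i}(\bar C,I)$ for $I_*$-positively many $i$. Fix $\bar t=\langle t_\delta:\delta\in S\rangle$ witnessing $\OBB_J(\bar C,I)$ by clause (a); since $g_{t_\delta}\in\prod_{i<i^*}J_i$ by clause $(f)(\gamma)$, put $\bar s^i=\langle g_{t_\delta}(i):\delta\in S\rangle$ and $\cU=\{i<i^*:\bar s^i\text{ witnesses }\OBB_{J_i}(\bar C,I)\}$. It suffices to prove $\cU\in I_*^+$, so assume toward a contradiction that $\cU\in I_*$. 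For each $i\in i^*\setminus\cU$ fix $f_i:\Dom(\bar C)\to J_i$ with $A_i:=\{\delta\in S:(\forall\alpha\in C_\delta)(f_i(\alpha)\le_{J_i}g_{t_\delta}(i))\}\in I$, and let $f_i$ be arbitrary for $i\in\cU$. For $\alpha\in\Dom(\bar C)$ set $h_\alpha:=\langle f_i(\alpha):i<i^*\rangle\in\prod_{i<i^*}J_i$, and define $f:\Dom(\bar C)\to J$ by picking $f(\alpha)$ with $h_\alpha\le_{I_*}g_{f(\alpha)}$ (possible by clause $(f)(\delta)$, cofinality of $\bar g$); in Pos (C) take instead $f(\alpha):=\bold F(h_\alpha)$, and in Pos (E) take $f(\alpha):=\bold F_{\otp(c(\alpha))}(\langle h_{\gamma_\varepsilon}:\varepsilon\le\otp(c(\alpha))\rangle)$, where $c(\alpha):=C_\delta\cap\alpha$ for any $\delta\in S$ with $\alpha\in C_\delta$ (well defined by tree-likeness) and $\langle\gamma_\varepsilon:\varepsilon\le\otp(c(\alpha))\rangle$ enumerates $c(\alpha)\cup\{\alpha\}$ increasingly.

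Apply $\OBB_J(\bar C,I)$ to $f$: the set $B:=\{\delta\in S:(\forall\alpha\in C_\delta)(f(\alpha)\le_J t_\delta)\}$ lies in $I^+$. The heart of the matter is the claim that for every $\delta\in B$ there is $W^\delta\subseteq i^*$ with $W^\delta\notin I_*$ and $(\forall i\in W^\delta)(\forall\alpha\in C_\delta)(f_i(\alpha)\le_{J_i}g_{t_\delta}(i))$. Granting this, for each $\delta\in B$ choose $i(\delta)\in W^\delta\setminus\cU$ (nonempty since $W^\delta\notin I_*$ while $\cU\in I_*$), so $\delta\in A_{i(\delta)}$ and hence $B\subseteq\bigcup\{A_i:i\in i^*\setminus\cU\}$. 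As $|i^*\setminus\cU|\le|i^*|$, $I$ is $|i^*|^+$-complete by clause (b), and $B\notin I$, some $i\in i^*\setminus\cU$ has $A_i\cap B\notin I$, whence $A_i\notin I$ --- contradicting the choice of $f_i$. So it remains to verify the claim in each subcase of clause (g).

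In Pos (A), for $\delta\in B$ and $\alpha\in C_\delta$ we have $f(\alpha)\le_J t_\delta$, hence $g_{f(\alpha)}\le_{I_*}g_{t_\delta}$ by clause $(f)(\varepsilon)$, and $h_\alpha\le_{I_*}g_{f(\alpha)}$, so $V_\alpha:=\{i:f_i(\alpha)\le_{J_i}g_{t_\delta}(i)\}$ satisfies $i^*\setminus V_\alpha\in I_*$; since $I_*$ is $|C_\delta|^+$-complete (which follows from $(\alpha)$ or $(\alpha)'$, using $|C_\delta|\le\otp(C_\delta)\le\kappa$), $W^\delta:=\bigcap_{\alpha\in C_\delta}V_\alpha\notin I_*$. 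In Pos (C), enumerate $C_\delta$ as $\langle\alpha_\varepsilon:\varepsilon<\kappa\rangle$ (padding to length $\kappa$ by repeating a point when $\otp(C_\delta)<\kappa$, and taking $W^\delta:=i^*$ when $C_\delta=\emptyset$) and apply the second property of $\bold F$ to $f_\varepsilon:=h_{\alpha_\varepsilon}$, using $\bold F(f_\varepsilon)=f(\alpha_\varepsilon)\le_J t_\delta$, to get $W^\delta$ directly. In Pos (E), write $C_\delta=\{\alpha_\varepsilon:\varepsilon<\kappa\}$ increasing; tree-likeness gives $c(\alpha_\varepsilon)=\{\alpha_\zeta:\zeta<\varepsilon\}$, so $f(\alpha_\varepsilon)=\bold F_\varepsilon(\langle h_{\alpha_\zeta}:\zeta\le\varepsilon\rangle)\le_J t_\delta$, and the defining property of $\langle\bold F_\varepsilon:\varepsilon<\kappa\rangle$ yields $W^\delta$.

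Pos (B) and Pos (D) are where the real work lies, and I expect the bookkeeping there to be the main obstacle: $I_*$ need no longer be $\kappa^+$-complete, so one cannot simply intersect the $\kappa$ sets $V_\alpha$. In Pos (B) the device is to exploit tree-likeness together with $\kappa$-directedness of the $J_i$ (clause $(\beta)$): for $i\notin\cU$ one may assume $f_i$ is $\le_{J_i}$-nondecreasing along branches of the tree, since replacing $f_i(\alpha)$ by a $\le_{J_i}$-upper bound of $\{f_i(\beta):\beta\in c(\alpha)\cup\{\alpha\}\}$ (which exists because $|c(\alpha)|<\kappa=\otp(C_\delta)$) only enlarges $A_i$. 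Then for $\delta\in B$ with $C_\delta=\{\alpha_\varepsilon:\varepsilon<\kappa\}$ increasing, the sets $\cU_\varepsilon:=\{i<i^*:f_i(\alpha_\varepsilon)\not\le_{J_i}g_{t_\delta}(i)\}$ are $\subseteq$-increasing in $\varepsilon$ (branch-monotonicity of $f_i$) and each lies in $I_*$ (as $h_{\alpha_\varepsilon}\le_{I_*}g_{f(\alpha_\varepsilon)}\le_{I_*}g_{t_\delta}$, the last step by clause $(f)(\varepsilon)$ and $\delta\in B$); clause $(\gamma)$ of Pos (B) applied to $\langle\cU_\varepsilon:\varepsilon<\kappa\rangle$ then gives $W^\delta:=i^*\setminus\bigcup_{\varepsilon<\kappa}\cU_\varepsilon\notin I_*$. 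Pos (D) is handled along the same lines, but one must additionally construct $f$ coherently along the tree (again using $\kappa$-directedness of the $J_i$, so that $\langle g_{f(\alpha_\varepsilon)}:\varepsilon<\kappa\rangle$ dominates the relevant partial joins in a monotone fashion) in order to bring clause $(\gamma)$ of Pos (D) --- applied to $\langle f(\alpha_\varepsilon):\varepsilon<\kappa\rangle$ with $t=t_\delta$ --- to bear in place of the abstract chain condition of Pos (B); reconciling this with the $\subseteq$-increasing structure of the bad sets is the delicate point. With the claim established in all subcases, the proof is complete.
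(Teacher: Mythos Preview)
Your proof is correct and follows essentially the same approach as the paper's: pull back the witness $\bar t$ coordinatewise to $\bar s^i$, pick counterexample functions $f_i$ on the bad coordinates, push them forward to a single $J$-valued function via cofinality of $\bar g$ (with the branch-monotone modification in the tree-like cases), apply $\OBB_J$, and finish using $|i^*|^+$-completeness of $I$ --- the only bookkeeping differences are that the paper fixes a single $\delta(*)\in W_*\setminus\bigcup_i W_i$ and shows $\cU_*\subseteq\cU_0$ directly (rather than your contradiction via $B\subseteq\bigcup A_i$), and that the paper builds the monotonicity into $h_\alpha$ rather than into $f_i$. One small slip: replacing $f_i(\alpha)$ by a branchwise upper bound \emph{shrinks} $A_i$, not enlarges it --- but that is exactly what you want, since $A_i\in I$ is preserved under shrinking.
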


\begin{remark}
1) In Pos(A) of \ref{j.45}, when clause $(\alpha)$ holds 
we get ${\cU} = i^* \mod I$.
\end{remark}

\begin{PROOF}{\ref{j.45}}
Let $B = \cup\{C_\delta:\delta \in S(\bar C)\}$.

Let $\bar t = \langle t_\delta:\delta \in S\rangle$
witness $\OBB_J(\bar C,I)$.  For each $i < i^*$, we consider $\bar s^i :=
\langle g_{t_\delta}(i):\delta \in S\rangle \in {}^S(J_i)$.  
We denote the $\delta$-th member of $\bar s^i$ by $s^i_\delta$, so
$s^i_\delta = g_{t_\delta}(i)$ for $i<i^*$ and $\delta \in S$.

Let
\mn
\begin{enumerate}
\item[$(*)_1$]    ${\cU}_0 := \{i<i^*:\bar s^i$ is a witness
for $\OBB_{J_i}(\bar C,I)\}$.
\end{enumerate}
\mn
It suffices to prove
\mn
\begin{enumerate}
\item[$(*)_2$]  ${\cU}_0 \notin I_*$
\end{enumerate}
[Why?  Obviously.]

Now for each $i \in {\cU}_1 := i^* \backslash {\cU}_0$ 
let $f_i:B \rightarrow J_i$ exemplify that $\bar s^i$ is not a witness for
$\OBB_{J_i}(\bar C,I)$, i.e.,
\mn
\begin{enumerate}
\item[$(*)_3$]   if $i \in i_* \backslash {\cU}_0$ then $W_i =
\emptyset$ mod $I$ where $W_i := \{\delta \in S:(\forall \alpha \in
C_\delta)(f_i(\alpha) \le_{J_i} s^i_\delta)\}$.
\end{enumerate}
\mn
If $i \in {\cU}_0$, choose any $f_i:B \rightarrow J_i$.

Now
\mn
\begin{enumerate}
\item[$(*)_4$]  $W := \cup\{W_i:i \in {\cU}_1\} \in I$.
\end{enumerate}
\mn
[Why?  By clause (b) of the assumption, the ideal $I$ is $|i^*|^+$-complete.]

Now we choose for each $\alpha \in B$ a function $h_\alpha$ as
follows:
\mn
\begin{enumerate}
\item[$(*)_5$]  $(a) \quad h_\alpha \in \prod\limits_{i<i^*} J_i$
\sn
\item[${{}}$]   $(b) \quad i <i^* \Rightarrow f_i(\alpha) 
\le_{J_i} h_\alpha(i)$
\sn
\item[${{}}$]   $(c) \quad$  if ($\bar C$ is tree-like and each
$J_i$ is $\kappa$-directed),

\hskip25pt  \then \,
$h_\alpha(i)$ is a common $\le_{J_i}$-upper bound of $\{f_i(\alpha)\}
\cup \{h_\beta(i)$:

\hskip25pt  there is $\delta \in S$ such that
$\alpha \in C_\delta \wedge \beta \in C_\delta \cap \alpha\}$; 

\hskip25pt so $\langle h_\alpha(i):\alpha \in C_\delta\rangle$
is $\le_{J_i}$-increasing for each $\delta \in S$.
\end{enumerate}
\mn
[Why does such an $h_\alpha$ exist?  If the assumption of $(*)_5(c)$ 
fails, we let
$h_\alpha(i) = f_i(\alpha)$.  If the assumption of $(*)_5(c)$ holds, then (for
$\alpha \in B,i<i^*$) the set $\{f_\beta(i):\beta = \alpha$ or
$(\exists \delta \in S)(\alpha \in C_\delta \wedge \beta \in C_\delta
\cap \alpha)\}$ has cardinality $< \kappa$ because 
$\bar C$ is tree-like and has a common $\le_{J_i}$-upper bound since 
$J_i$ is $\kappa$-directed and let $h_\alpha(i)$ be any such bound.]

So for $\alpha \in B$, we have $h_\alpha \in \prod\limits_{i<i^*} J_i$, and
hence by clause $(f)(\delta)$ of the assumption, we can choose 
$s_\alpha \in J$ such that:
\mn
\begin{enumerate}
\item[$(*)_6$]  $(a) \quad h_\alpha \le_{I_*} g_{s_\alpha}$, i.e.,

$\quad {\cU}^0_\alpha := \{i < i^*:\neg(h_\alpha(i) \le_{J_i}
g_{s_\alpha}(i))\} \in I_*$
\sn
\item[${{}}$]  $(b) \quad$ in Pos(C), $s_\alpha := \bold F(h_\alpha)$
\sn
\item[${{}}$]  $(c) \quad$ in Pos(E), if $\alpha \in C$ and $\zeta
< \kappa$ letting 
$\langle \alpha_\varepsilon:\varepsilon \le \zeta\rangle$ list
$C_\delta \cap (\alpha + 1)$, 

\hskip20pt for any $\delta \in S$ such that $\alpha
\in C_\delta$, we have 

\hskip20pt $s_\alpha = \bold F_\zeta(\langle h_{\alpha_\varepsilon}:
\varepsilon \le \zeta\rangle)$.
\end{enumerate}
\mn
Note that the demands in clauses (b),(c) of $(*)_6$ are compatible with
the demand in clause (a) of $(*)_6$.

So $\alpha \mapsto s_\alpha$ is a function from $B = \bigcup\limits_{\delta \in
S} C_\delta$ to $J$, but $\langle t_\delta:\delta \in S\rangle$ was
chosen exemplifying $\OBB_J(\bar C,I)$, hence
\mn
\begin{enumerate}
\item[$(*)_7$]  $W_* := \{\delta \in S$ \,: if $\alpha \in C_\delta$, 
then $s_\alpha \le_J t_\delta\}$ belongs to $I^+$.
\end{enumerate}
\mn
Recalling that $W = \cup\{W_i:i \in {\cU}_1\} \in I$, clearly $W_*
\nsubseteq W$, hence we can choose $\delta(*)$ such that
\mn
\begin{enumerate}
\item[$(*)_8$]  $\delta(*) \in W_* \backslash W$.
\end{enumerate}

Now
\begin{enumerate}
\item[$(*)_9$]  if $\alpha \in C_{\delta(*)}$, then 
${\cU}^1_{\delta(*),\alpha} := \{i < i^*:h_\alpha(i) \le_{J_i} 
g_{t_{\delta(*)}}(i)\} = i^*$ mod $I_*$.
\end{enumerate}
\mn
[Why?  Because for each $\alpha \in C_{\delta(*)}$ 
in the partial order $J_* := ( \prod\limits_{i<i^*} J_i,\le_{I_*})$,
we have $h_\alpha \le_{I_*} g_{s_\alpha}$ by the choice of $s_\alpha$, 
i.e. by $(*)_6$ so $\{i < i^*:h_\alpha(i) \le g_{s_\alpha}(i)\} = i^* 
\mod I_*$.  Also $s_\alpha \le_J
t_{\delta(*)}$ because $\delta(*) \in W_*$ see $(*)_7 + (*)_8$ hence
$g_{s_\alpha} \le_{I_*} g_{t_{\delta(*)}}$ by
clause $(f)(\varepsilon)$ of the assumption and by clause $(f)(\delta)$
this means that $\{i < i^*:g_{s_\alpha}(i) \le_{J_i} 
g_{t_{\delta(*)}}(i)\} = i^*$ mod $I_*$.  Together, the last two
sentences give $(*)_9$.]

The proof now splits according to the relevant part of clause (g) of
the assumption.
\bigskip

\noindent
\underline{Case 1}:  \underline{Pos A}:

Now $I_*$ is $\kappa^+$-complete (or just
$|C_{\delta(*)}|^+$-complete), by clause $(\alpha)$ 
(or clause $(\alpha)'$) of Pos$(A)$ so necessarily (by $(*)_9$).
\mn
\begin{enumerate}
\item[$(*)_{10}$]  ${\cU}_* := \bigcap\limits_{\alpha \in
C_{\delta(*)}} {\cU}^1_{\delta(*),\alpha} = i^* \mod I_*$.
\end{enumerate}
\mn
Now if $i \in {\cU}_* \backslash {\cU}_0$, then recall that $f_i:B
\rightarrow J_i$ exemplifies that $\bar s^i$ is not a witness for
$\OBB_{J_i}(\bar C,I)$ and $W_i$ is well-defined and a subset of $W$, 
hence $\delta(*) \notin W_i$ and $\delta(*) \notin W$ by its
choice; hence for some $\alpha_i \in C_\delta$, we
have $\neg(f_i(\alpha_i) \le_{J_i} s^i_\delta)$. But this means that
$i \notin {\cU}^1_{\delta(*),\alpha_i}$ which by $(*)_{10}$ implies
$i \notin {\cU}_*$, a contradiction.  We conclude
\mn
\begin{enumerate}
\item[$(*)_{11}$]  ${\cU}_* \subseteq {\cU}_0$.
\end{enumerate}
\mn
So by $(*)_{10} + (*)_{11}$, we have proved $(*)_2$ which, as noted
above, is sufficient for proving the Lemma \ref{j.45} when Pos(A)
hold; this even gives more.
\bigskip

\noindent
\underline{Case 2}:  Pos$(B)$

\noindent 
Why?  Note that by clauses $(\alpha),(\beta)$ of Pos$(B)$, clause (c)
of $(*)_5$ apply.  So for each $i < i^*$, by $(*)_5(c)$ the sequence
$\langle h_\alpha(i):\alpha \in C_{\delta(*)} \rangle$ is
$\le_{J_i}$-increasing.  Hence by $(*)_9$ the sequence $\langle
\cU^1_{\delta(*),\alpha}:\alpha \in C_{\delta(*)}\rangle$ is
$\subseteq$-decreasing and so $\langle i^* \backslash
\cU^1_{\delta(*),\alpha}:\alpha \in C_{\delta(*)}\rangle$ is a
$\subseteq$-increasing sequence of members of $I_*$ hence by clause
$(\gamma)$ of Pos$(B)$, the set $\cU_* :=
\cap\{\cU^1_{\delta,\alpha}:\alpha \in C_{\delta(*)}\}$ is $\ne
\emptyset$ mod $I_*$.  Now we repeat the proof in Case 1 after
$(*)_{10}$ finishing the proof of Lemma \ref{j.45} when Pos(B) holds.
\bigskip

\noindent
\underline{Case 3}:  Pos$(C)$

This is easier.
\bigskip

\noindent
\underline{Case 4}:  Pos$(D)$

By clauses $(\alpha),(\beta)$ of Pos$(D)$ the assumption of clause (c)
of $(*)_5$ holds hence its conclusion.  We continue as in Case 2.
\bigskip

\noindent
\underline{Case 5}:  Pos$(E)$

Easy.  
\end{PROOF}

\begin{conclusion}
\label{j.45.2}
Assume that $\mu > \cf(\mu) = \sigma,\mu > \kappa = 
\cf(\kappa) \ne \sigma$, and $J$ is
an ideal on $\sigma$ which is $\sigma$-complete (or just $\sigma >
\kappa \Rightarrow J$ is $\kappa^+$-complete).

\noindent
1) If $\mu < \lambda = \cf(\lambda) < \pp^+_J(\mu),S
\subseteq S^\lambda_\kappa$ is stationary, and $\bar C = \langle
C_\delta:\delta \in S\rangle$ is a strict $(\lambda,\kappa)$-ladder
system, tree-like when $\sigma > \kappa$, \then \, for
unboundedly many regular $\theta < \mu$, we have $\OBB_\theta(\bar C)$.

\noindent
2) Assume that for each regular $\lambda \in
(\mu,\pp^+_J(\mu)),S_\lambda \subseteq S^\lambda_\kappa$ is
stationary and $\bar C^\lambda = \langle C^\lambda_\delta:\delta \in
S_\lambda\rangle$ is a strict $(\lambda,\kappa)$-ladder system
which is tree-like when $\sigma > \kappa$. \Then \, for some
$\mu_0 < \mu$, for every regular $\theta \in (\mu_0,\mu)$, for some
$\lambda$ we have $\mu < \lambda = \cf(\lambda) < \pp^+_J(\mu)$
and $\OBB_\theta(\bar C^\lambda)$.
\end{conclusion}

\begin{PROOF}{\ref{j.45.2}}
1) By the ``No hole Conclusion", 
\cite[Ch.II,2.3,pg.53]{Sh:g} there is
a sequence $\langle \lambda_i:i < \sigma\rangle$ of regular cardinals
such that $\mu = \lim_J\langle \lambda_i:i < \sigma\rangle$ and
$\lambda = \tcf(\prod\limits_{i < \sigma} \lambda_i,<_J)$; 
let $\langle g_\alpha:\alpha < \lambda\rangle$ exemplify this.  We shall
apply \ref{j.45}, Pos$(A)$ if $\sigma > \kappa$, Pos$(B)$ if $\sigma <
\kappa$ with $\sigma,J^{\nst}_\lambda \restriction S,
J^{\bd}_{\lambda_i}$, (for $i < \sigma$),
$J,(\lambda,<),\langle g_\alpha:\alpha < \lambda\rangle$ here standing
for $i^*,I,J_i$ (for $i<i^*$), $I_*,J,\langle g_t:t \in J\rangle$ there.

Note that clause (a) of the assumption of \ref{j.45} says that
$\OBB_\lambda(\bar C,J^{\nst}_\lambda \restriction S)$, it holds by
Observation \ref{j.44}(1); the other assumptions of Lemma
\ref{j.45} are also obvious.  So its conclusion holds,
i.e., $\{i < \sigma:\OBB_{\lambda_i}(\bar C)\}$ belongs to $J^+$.

Therefore, since $\mu = \lim_J\langle \lambda_i:i < \sigma\rangle$,
clearly $\mu = \sup\{\lambda_i:i < \sigma$ and $\OBB_{\lambda_i}
(\bar C)\}$ as required.

\noindent
2) Similarly.
\end{PROOF}

\noindent
Note that useful to combine the following older results with \ref{j.45.2} is:
\begin{observation}
\label{j.45.3}
In \ref{j.45.2}, assuming $\mu < \lambda = \cf(\lambda) < \pp^+_J(\mu)$.

\noindent
1) We can find a stationary $S \subseteq
S^\lambda_\kappa$ such that $S \in \check I[\lambda]$.

\noindent
2) For any stationary $S \subseteq S^\lambda_\kappa$ from $\check
 I[\lambda]$ there are a club $E$ of $\lambda$ and a strict 
$(\lambda,\kappa)$-ladder system $\bar C = \langle C_\delta:\delta \in
S \cap E\rangle$ which is tree-like.
\end{observation}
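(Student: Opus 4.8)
The plan is to deduce both parts from the pcf analysis of the approachability ideal $\check I[\lambda]$, feeding in the very scale that already drives the proof of \ref{j.45.2}.

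\emph{Part (1).} By the ``No hole Conclusion'' (\cite{Sh:g}, as invoked for \ref{j.45.2}) I would fix regular cardinals $\langle\lambda_i:i<\sigma\rangle$ with $\mu=\lim_J\langle\lambda_i:i<\sigma\rangle$ and $\lambda=\tcf(\prod_{i<\sigma}\lambda_i,<_J)$, witnessed by a scale $\bar g=\langle g_\alpha:\alpha<\lambda\rangle$; since $\kappa<\mu=\lim_J\langle\lambda_i\rangle$ one may, after deleting a $J$-null set of coordinates, assume $\kappa<\lambda_i$ for all $i$. From $\bar g$ one reads off, in the standard way, a sequence $\bar a=\langle a_\alpha:\alpha<\lambda\rangle$ with $a_\alpha\subseteq\alpha$, $|a_\alpha|<\lambda$, coding $\{g_\beta:\beta<\alpha\}$ well enough that on a club $E$ of $\lambda$ every $\delta\in E$ of cofinality $<\mu$ which is \emph{good} for $\bar g$ is approachable with respect to $\bar a$; hence the set of good points of cofinality $\kappa$ lies in $\check I[\lambda]$. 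It then remains to quote the basic pcf fact that (for a suitable scale) the set of good points of cofinality $\kappa$ is stationary in $\lambda$, and to take $S$ to be that set. This is exactly where the hypotheses of \ref{j.45.2} bite: $\mu<\lambda<\pp^+_J(\mu)$ is what yields a scale of length $\lambda$ at all, and $\kappa=\cf(\kappa)\ne\sigma=\cf(\mu)$ is what controls the scale at cofinality $\kappa$ (the analogue at cofinality $\sigma$ can fail). When $\lambda=\mu^+$ one may instead quote the classical ZFC fact that $S^{\mu^+}_\kappa\in\check I[\mu^+]$ for every regular $\kappa<\mu$ with $\kappa\ne\cf(\mu)$.

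\emph{Part (2).} Given stationary $S\subseteq S^\lambda_\kappa$ with $S\in\check I[\lambda]$, fix a witnessing sequence $\bar b=\langle b_\alpha:\alpha<\lambda\rangle$ together with a club $E_0$. I would then fix a large $\chi$, a well-ordering $<^*$ of $H(\chi)$, and an increasing continuous chain $\langle N_i:i<\lambda\rangle$ of elementary submodels of $(H(\chi),\in,<^*)$ with $|N_i|<\lambda$, $N_i\cap\lambda\in\lambda$, $\langle N_j:j\le i\rangle\in N_{i+1}$, and $\bar b,S,E_0\in N_0$; put $E=\{\delta\in E_0:N_\delta\cap\lambda=\delta\}$, a club. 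For $\delta\in S\cap E$, using that $\delta$ is $\bar b$-approachable and that $N_\delta$ sees the relevant initial part of $\langle N_i\rangle$, one builds by recursion on $\zeta<\kappa$ a strictly increasing cofinal sequence $\langle\alpha^\delta_\zeta:\zeta<\kappa\rangle$ in $\delta$ in which each $\alpha^\delta_\zeta$ is chosen $<^*$-canonically from parameters already available at stage $\zeta$, and sets $C_\delta=\{\alpha^\delta_\zeta:\zeta<\kappa\}$. The coherence $\langle N_j:j\le i\rangle\in N_{i+1}$ of the chain, together with the canonicity of the choices, makes $\langle C_\delta:\delta\in S\cap E\rangle$ a strict $(\lambda,\kappa)$-ladder system which is tree-like: if $\alpha\in C_{\delta_1}\cap C_{\delta_2}$ then $C_{\delta_1}\cap\alpha$ and $C_{\delta_2}\cap\alpha$ are produced by the same canonical recursion from the same parameters, hence coincide. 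This is the standard passage from an approachability witness to a coherent ladder system, and is where \cite{Sh:898} and \cite{Sh:g} are invoked.

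The main obstacle is in Part (1): the claim that the good points of cofinality $\kappa$ form a stationary subset of $\lambda$. This is the genuine pcf content, it is the step that breaks at cofinality $\sigma$ (so the hypothesis $\kappa\ne\cf(\mu)$ is essential), and it is what consumes $\lambda<\pp^+_J(\mu)$. In Part (2) the only delicate point is organizing the recursion so that the $C_\delta$'s simultaneously have order type exactly $\kappa$, are cofinal in $\delta$, and cohere across \emph{all} pairs $\delta_1,\delta_2\in S\cap E$ (including at non-accumulation points of the ladders) --- this is routine bookkeeping with the chain of models but must be done with care, and it is precisely what upgrades ``a ladder system'' to ``a tree-like ladder system''.
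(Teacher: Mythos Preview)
Your proposal is correct and in fact supplies far more detail than the paper does: the paper's own proof of this observation is a bare citation to \cite[\S1]{Sh:420} for both parts, with no argument written out. What you have sketched is essentially the content behind that citation.

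One comparative remark on Part~(1): your route through a scale on $\prod_{i<\sigma}\lambda_i$ and its good points of cofinality~$\kappa$ is a valid way to land inside $\check I[\lambda]$, and it is natural here because that scale is already on the table from \ref{j.45.2}. However, the result quoted from \cite{Sh:420} is actually more general and does not consume the hypothesis $\lambda<\pp^+_J(\mu)$ at all: a stationary $S\subseteq S^\lambda_\kappa$ with $S\in\check I[\lambda]$ exists in ZFC whenever $\kappa^+<\lambda$ are regular, and here that follows simply from $\kappa<\mu<\lambda$. So while your argument is correct in context, you are invoking a stronger hypothesis than the underlying theorem requires; the paper, by citing \cite{Sh:420} directly, implicitly uses only $\kappa^+<\lambda$. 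For Part~(2), your elementary-submodel construction of a coherent (tree-like) ladder system from an approachability witness is exactly the standard argument that \cite{Sh:420} records.
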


\begin{PROOF}{\ref{j.45.3}}
1) By \cite[\S1]{Sh:420} there is such $S$.

\noindent
2) By \cite[\S1]{Sh:420} there is a strict $S$-ladder system 
$\bar C = \langle C_\delta:\delta \in S\rangle$ which is tree-like.
\end{PROOF}

\begin{conclusion}
\label{j.45.5}
Assume $\mu \in \bold C_\sigma,\kappa = \cf(\kappa) \in \Reg 
\cap \mu \backslash \{\sigma\}$ and for every $\lambda \in \Reg
\cap (2^\mu)^+ \backslash \mu$ the sequence $\bar C_\lambda :=
\langle C^\lambda_\delta:\delta \in S_\lambda\rangle$ is a
$(\lambda,\kappa)$-ladder system.

Assume $\kappa < \sigma$ \underline{or} $\sigma < \kappa < \mu$ and
each $\bar C_\lambda$ is tree-like.

\noindent
1) For every large enough regular $\theta < \mu$ we have
$\OBB_\theta(\bar C_\lambda)$ for some $\lambda \in \Reg \cap (\mu,2^\mu]$.

\noindent
2) If $\kappa < \sigma$ and $\lambda \in [\mu,2^\mu] \cap \Reg$
\then \, for arbitrarily large $\theta \in \Reg 
\cap \mu$ we have $\OBB_\theta(\bar C_\lambda)$.
\end{conclusion}

\begin{PROOF}{\ref{j.45.5}}  By \ref{j.45.2}.
\end{PROOF}

\begin{question}
\label{j.45.7}
1) On entangled linear orders see \cite{Sh:462}, existence is proved
   in some $\mu^+$, but it remains open whether we can demand $\mu =
   \mu^{\aleph_0}$; is the present work helpful?

\noindent
2) If ${\cP} \subseteq {}^\omega \lambda,\mu \le \lambda
\le 2^\mu$ and $|{\cP}| \le \lambda$, can we partition ${\cP}$ to
``few" $\aleph_\omega$-free sets?  What if we add
$2^\mu = \lambda = 2^{< \lambda} = \cf(\lambda)$?
\end{question}
\bigskip

\noindent
\centerline {$* \qquad * \qquad *$}

\begin{claim}
\label{j.46}
1) Assume
\mn
\begin{enumerate}
\item[$(a)$]  $\OBB^+_J(\bar C,I)$ and\footnote{using
$C_\delta$'s of constant cardinality is a loss but only if $\kappa$ is
a limit cardinal.} $\kappa = \cf(\kappa) > 
|C_\delta|$ for $\delta \in S := S(\bar C)$
\sn
\item[$(b)$]  $I$ is $(2^{|i^*|})^+$-complete
\sn
\item[$(c)$]  $\bar J = \langle J_i:i < i^*\rangle$
\sn
\item[$(d)$]  $J_i$ is a partial order such that
$J_i \models \forall s \exists t(s<t)$
\sn
\item[$(e)$]  $I_*$ is a $\kappa$-complete ideal on $i^*$
\sn
\item[$(f)$]  $(\alpha) \quad J$ is a partial order
\sn
\item[${{}}$]  $(\beta) \quad \bar g = \langle g_t:t \in J\rangle$
\sn
\item[${{}}$]  $(\gamma) \quad g_t \in \prod\limits_{i<i^*} J_i$
\sn
\item[${{}}$]  $(\delta) \quad s <_J t \Rightarrow g_s <_{I_*} g_t$
\sn
\item[$(g)$]   if $g^\varepsilon \in  \prod\limits_{i<i^*}
J_i$ for $\varepsilon < \varepsilon^* < \kappa$,
\then \, for some $A \in I^+_*$ and for each
$\varepsilon < \varepsilon^*$, for some $t \in J$,
we have $g^\varepsilon \restriction 
A <_{I_* \restriction A} g_t \restriction A$.
\end{enumerate}
\mn
\Then \, $\{i < i^*:\OBB_{J_i}(\bar C,I)\} \ne \emptyset \mod I_*$.  

\noindent
2) If clause (a) above holds for $I_* \rest A_0$ for any $A_0 \in
   I^+_*$, \then \, we can strengthen the conclusion to $\{i < i^*:
\OBB_{J_i}(\bar C,I)\} = i^* \mod I_*$.
\end{claim}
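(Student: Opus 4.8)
The plan is to run the argument of Lemma~\ref{j.45}, with the cofinality clause $(f)(\delta)$ there replaced by the purely local bounding clause (g) here, compensating for this weakening by using the stronger completeness of $I$ in clause (b) to \emph{freeze}, along an $I$-positive set of $\delta$'s, the set $A\subseteq i^*$ produced by (g). In detail: let $\bar t=\langle t_\delta:\delta\in S\rangle$ witness $\OBB^+_J(\bar C,I)$, set $B=\bigcup_{\delta\in S}C_\delta$, $s^i_\delta=g_{t_\delta}(i)$, $\bar s^i=\langle s^i_\delta:\delta\in S\rangle\in{}^S(J_i)$, and $\cU_0=\{i<i^*:\bar s^i$ witnesses $\OBB_{J_i}(\bar C,I)\}$; it suffices to show $\cU_0\notin I_*$, so assume not. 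For $i\in\cU_1:=i^*\setminus\cU_0$ fix $f_i:B\to J_i$ exemplifying the failure, for $i\in\cU_0$ let $f_i$ be arbitrary, put $h_\alpha=\langle f_i(\alpha):i<i^*\rangle\in\prod_{i<i^*}J_i$ and $W_i=\{\delta\in S:(\forall\alpha\in C_\delta)(f_i(\alpha)\le_{J_i}s^i_\delta)\}$; then $W_i=\emptyset\mod I$ for $i\in\cU_1$, and $W:=\bigcup_{i\in\cU_1}W_i\in I$ since $|\cU_1|\le|i^*|\le 2^{|i^*|}$ and $I$ is $(2^{|i^*|})^+$-complete.

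Next, for each $\delta\in S$ apply clause (g) to the family $\langle h_\alpha:\alpha\in C_\delta\rangle$ — legitimate since $|C_\delta|<\kappa$ by clause (a) — getting $A_\delta\in I_*^+$ such that every $h_\alpha$ ($\alpha\in C_\delta$) satisfies $h_\alpha\rest A_\delta<_{I_*\rest A_\delta}g_t\rest A_\delta$ for some $t\in J$. Since $\delta\mapsto A_\delta$ takes at most $2^{|i^*|}$ values, $S\notin I$ (as $\OBB^+_J(\bar C,I)$ is non-trivial), and $I$ is $(2^{|i^*|})^+$-complete, there is a single $A^*\in I_*^+$ with $S''=\{\delta\in S:A_\delta=A^*\}\in I^+$. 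Now for every $\alpha\in\bigcup_{\delta\in S''}C_\delta$, any $\delta\in S''$ with $\alpha\in C_\delta$ shows (via its (g)-witness, whose first coordinate is $A^*$) that $\{t\in J:h_\alpha\rest A^*<_{I_*\rest A^*}g_t\rest A^*\}$ is nonempty, so we may \emph{choose} $r_\alpha\in J$ with $h_\alpha\rest A^*<_{I_*\rest A^*}g_{r_\alpha}\rest A^*$; crucially $r_\alpha$ now depends on $\alpha$ alone (and the fixed $A^*$), not on $\delta$. Feeding the single function $\alpha\mapsto r_\alpha$ (defined arbitrarily off $\bigcup_{\delta\in S''}C_\delta$) into $\OBB^+_J(\bar C,I)$ yields $W^*:=\{\delta\in S:(\forall\alpha\in C_\delta)(r_\alpha\le_J t_\delta)\}=S\mod I$; hence $W^*\cap S''=S''\mod I$ is in $I^+$ while $W\in I$, so we may fix $\delta(*)\in(W^*\cap S'')\setminus W$.

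To conclude, consider $\cU_*:=\{i<i^*:(\forall\alpha\in C_{\delta(*)})(h_\alpha(i)\le_{J_i}g_{t_{\delta(*)}}(i))\}$. For each $\alpha\in C_{\delta(*)}$ we have $h_\alpha\rest A^*<_{I_*\rest A^*}g_{r_\alpha}\rest A^*$ by the choice of $r_\alpha$, and $r_\alpha\le_J t_{\delta(*)}$ since $\delta(*)\in W^*$, so $g_{r_\alpha}\le_{I_*}g_{t_{\delta(*)}}$ by clause $(f)(\delta)$ (the case of equality being trivial); thus for $I_*$-almost every $i\in A^*$ we get $h_\alpha(i)\le_{J_i}g_{t_{\delta(*)}}(i)$. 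Taking the union of the (fewer than $\kappa$, one per $\alpha\in C_{\delta(*)}$) exceptional sets and using $\kappa$-completeness of $I_*$ gives $\cU_*\cap A^*=A^*\mod I_*$, so $\cU_*\in I_*^+$. Since $\cU_1=i^*\mod I_*$ by the assumption, choose $i\in\cU_*\cap\cU_1$; then $(\forall\alpha\in C_{\delta(*)})(f_i(\alpha)\le_{J_i}s^i_{\delta(*)})$, i.e.\ $\delta(*)\in W_i\subseteq W$, contradicting $\delta(*)\notin W$. This proves $\cU_0\notin I_*$, which is part (1).

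For part (2), fix any $A_0\in I_*^+$ and repeat the whole argument with $I_*$ replaced by $I_*\rest A_0$; by the hypothesis of (2) the assumptions relativize (this is the only point where that hypothesis is used — in particular it is what makes clause (g) usable for $I_*\rest A_0$), so part (1) applied to $I_*\rest A_0$ gives $\{i<i^*:\OBB_{J_i}(\bar C,I)\}\cap A_0\ne\emptyset$; as $A_0\in I_*^+$ was arbitrary, $i^*\setminus\{i<i^*:\OBB_{J_i}(\bar C,I)\}\in I_*$, as required. The one genuinely new difficulty, handled in the second paragraph, is that (g) only produces a bound \emph{after} passing to some $I_*$-positive $A$ that a priori varies with $\delta$: freezing $A^*$ on the $I$-positive set $S''$ via the $(2^{|i^*|})^+$-completeness of $I$ is precisely what turns the $(\delta,\alpha)$-indexed bounds into a coherent $\alpha$-indexed function that $\OBB^+_J$ can digest.
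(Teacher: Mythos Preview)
Your proof is correct and uses the same core ingredients as the paper's argument (clause (g) to produce an $I_*$-positive set $A$, the $(2^{|i^*|})^+$-completeness of $I$ to handle the $2^{|i^*|}$ possible choices of $A$, and $\kappa$-completeness of $I_*$ to combine over $C_{\delta(*)}$), but the organization is genuinely different. The paper does not freeze a single $A^*$; instead it defines, for \emph{every} $A\in I_*^+$, a function $h_A:\Dom(\bar C)\to J$ with $h_A(\alpha)$ a bound for $\langle f_i(\alpha):i\in A\rangle$ whenever one exists, feeds each $h_A$ into $\OBB^+_J$ to get $W_A=S\bmod I$, and then intersects all the $W_A$'s (and the $W^i$'s) using $(2^{|i^*|})^+$-completeness; only after picking $\delta$ in this intersection does it invoke (g) once to find the right $A$. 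Your ``freeze $A_\delta=A^*$ on an $I$-positive $S''$, then define a single $r_\alpha$'' is the dual bookkeeping: pigeonhole first, then one application of $\OBB^+_J$. Both routes are equally short; yours has the mild advantage of calling $\OBB^+_J$ only once rather than $2^{|i^*|}$ times, while the paper's avoids the need to check that $r_\alpha$ is well-defined independently of $\delta$.
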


\begin{PROOF}{\ref{j.46}}
1) We start to repeat the proof of \ref{j.45}.
Let $\bar t = \langle t_\delta:\delta \in S\rangle$ witness
$\OBB^+_J(\bar C,I)$.

Let ${\cU}_1 = \{i<i^*:\bar s^i := \langle
g_{t_\delta}(i):\delta \in S\rangle \in {}^S(J_i)$ is not a witness 
for $\OBB_{J_i}(\bar C,I)\}$ and for $i <i(*)$ let $f_i:
\Dom(\bar C) \rightarrow J_i$ exemplify $\neg \OBB_{J_i}(\bar C,I)$ 
if $i \in {\cU}_1$; hence clearly $\langle f_i(\alpha):i < i^*\rangle \in
 \prod\limits_{i<i^*} J_i$ for $\alpha \in \Dom(\bar C)$.  So $i \in
\cU_1 \Rightarrow W^i := \{\delta \in S:(\exists \alpha \in C_\delta)
(f_i(\alpha) \nleq g_{t_\delta}(i)\} = S \mod I$.

Toward contradiction assume that ${\cU}_1 = i^* \mod I_*$.

For each $A \in I^+_*$ we choose $h_A:\Dom(\bar C) \rightarrow J$ such that:
\mn
\begin{enumerate}
\item[$(*)_1$]  for $\alpha \in \Dom(\bar C)$, if 
there is $t \in J$ such that $\langle f_i(\alpha):i \in A \rangle <_{I_*
\restriction A} (g_t \rest A)$, then 
$\langle f_i(\alpha):i \in A \rangle <_{I_* \restriction A} g_{h_A(\alpha)}$.
\end{enumerate}
\mn
For each $A \in I^+_*$, by the choice of $\bar t$
we know that $W_A := \{\delta \in S$ :
for every $\alpha \in C_\delta$, $h_A(\alpha) \le_J t_\delta\} =S \mod I$.

But the ideal $I$ is $(2^{|i^*|})^+$-complete by clause (b) of the
assumption, hence $W := 
\cap\{W_A:A \in I^+_*\} \cap \{W^i:i \in {\cU}_1\} =S \mod I$.
Now for any $\delta \in W$, consider $\left <\langle
f_i(\alpha):i<i^*\rangle:\alpha \in C_\delta \right>$; it is a
sequence of $< \kappa$ members of ${}^{i^*}J$; hence by clause (f)
for some $A \in I^+_*$, each $\langle f_i(\alpha):i<i^*\rangle 
\restriction A$ has a bound belonging to the set 
$\{g_t \restriction A:t \in J\}$ in 
$(\prod\limits_{i \in A} J_i,<_{I_* \restriction A})$.

So by $(*)_1$ we have 
$\alpha \in C_\delta \Rightarrow \langle f_i(\alpha):i \in A\rangle
<_{I_* \restriction A} g_{h_A(\alpha)}$. But $\delta \in W \subseteq
W_A$, hence $h_A(\alpha) \le_J t_\delta$, which implies that 
$g_{h_A(\alpha)} \le_{I_*} g_{t_\delta}$.  So $\alpha \in 
C_\delta \Rightarrow \langle f_i(\alpha):i \in A \rangle 
\le_{I_* \restriction A} g_{h_A(\alpha)}
<_{I_*} g_{t_\delta}$.  However, $I_*$ is $\kappa$-complete by clause
(e) and $|C_\delta| < \kappa$, hence $B := \{i \in A:(\forall \alpha \in
C_\delta)(f_i(\alpha) <_{J_i} g_{t_\delta}(i)\} = A \mod I_*$.

As ${\cU}_1 = i^* \mod I_*$ and $A \in I^+_*$ clearly ${\cU}_1 \cap A \in
I^+_*$, so ${\cU}_1 \cap A \ne \emptyset$.  But letting 
$i \in A \cap {\cU}_1$, we get a contradiction to the 
assumption that $f_i$ exemplifies $\neg \OBB_{J_i}(\bar C,I)$, 
more exactly $W$ is thin enough, i.e., $W \subseteq W^i$ for $i \in \cU_1$.

\noindent
2) Easy. 
\end{PROOF}

\begin{claim}
\label{1f.55}
If $\lambda_n = \cf(2^{\mu_n}),
2^{\mu_n} < \mu_{n+1}$ for $n < \omega$ and
$(\forall \alpha < 2^{\mu_n})(|\alpha|^{\aleph_1} < 2^{\mu_n}),\chi < \mu =
\Sigma\{\mu_n:n < \omega\} \in \bold C_{\aleph_0}$ and $\lambda \in
\pcf\{\lambda_n:n < \omega\} \backslash \mu)$ (maybe $J$ is
an $\aleph_1$-complete ideal on $\kappa$) \then \, 
$\BB(\lambda,\aleph_{\omega +1},\chi,J_{\omega_1 * \omega})$ recalling
\cite[Definition 0.5=0p.14]{Sh:898}.

\noindent
2) Similarly replacing $\omega$ by $\sigma$, so $\aleph_1$ by $\sigma^+$.
\end{claim}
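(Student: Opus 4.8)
The plan is to deduce $\BB(\lambda,\aleph_{\omega+1},\chi,J_{\omega_1*\omega})$ from the ordered black boxes of \S1 together with the reduction of (ordinary) black boxes to ordered ones from \cite[\S0]{Sh:898}. Fix $\kappa:=\aleph_1$ and $\sigma:=\cf(\mu)=\aleph_0$ (for part (2), replace $\aleph_0,\aleph_1$ by $\sigma,\sigma^+$ and $J^{\bd}_\omega$ by $J^{\bd}_\sigma$ throughout). First observe that necessarily $\mu_n\ge\aleph_1$ (otherwise $\aleph_0^{\aleph_1}=2^{\aleph_1}\ge 2^{\aleph_0}=2^{\mu_n}$ contradicts the hypothesis), so $(2^{\mu_n})^\rho=2^{\mu_n}$ for $\rho<\mu_n$, and a König computation gives $\lambda_n=\cf(2^{\mu_n})\ge\mu_n$; thus the $\lambda_n$ are regular cardinals in $[\aleph_1,\mu)$ with $\sup_n\lambda_n=\mu$. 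As $\lambda\in\pcf\{\lambda_n:n<\omega\}\setminus\mu$, fix an ideal $J'\supseteq J^{\bd}_\omega$ on $\omega$ and a $<_{J'}$-increasing cofinal $\bar g=\langle g_\alpha:\alpha<\lambda\rangle$ with $\lambda=\tcf(\prod_n\lambda_n,<_{J'})$; since each $\{n:\lambda_n\le\nu\}$ ($\nu<\mu$) is finite we get $\mu=\lim_{J'}\langle\lambda_n:n<\omega\rangle$, hence $\mu<\lambda=\cf(\lambda)<\pp^+_{J'}(\mu)$. Now Observation \ref{j.45.3} applies (with $\mu\in\bold C_{\aleph_0}$ supplying the cardinal arithmetic that \cite[\S1]{Sh:420} uses): fix a stationary $S\subseteq S^\lambda_{\aleph_1}$ with $S\in\check I[\lambda]$ and — after intersecting $S$ with a club $E$ and renaming — a strict tree-like $(\lambda,\aleph_1)$-ladder system $\bar C=\langle C_\delta:\delta\in S\rangle$. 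Put $I:=J^{\nst}_\lambda\rest S$, a $\lambda$-complete ideal; by Observation \ref{j.44}(1), $\OBB_\lambda(\bar C,I)$ holds (even $\OBB^+_\lambda(\bar C,I)$).

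Next I would descend with Lemma \ref{j.45} applied to $i^*=\omega$, $I_*=J'$, $J=(\lambda,<)$, $J_i=(\lambda_i,<)$, $\bar g$ as above, and clause (g) in the form Pos$(B)$. Its hypotheses hold: (a) is $\OBB_\lambda(\bar C,I)$; (b) holds since $I$ is $\lambda$-complete, hence $\aleph_1$-complete; $\bar C$ is tree-like with $\otp(C_\delta)=\aleph_1=\kappa$; each regular $\lambda_i\ge\aleph_1$ is $\aleph_1$-directed (Pos$(B)(\beta)$); and Pos$(B)(\gamma)$ is automatic, because $\omega$ is countable, so every $\subseteq$-increasing $\omega_1$-chain of subsets of $\omega$ is eventually constant and cannot exhaust $\omega$ modulo $J'$. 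Hence $u:=\{n<\omega:\OBB_{\lambda_n}(\bar C,I)\}$ is $J'$-positive, so infinite; as $\lambda_n\ge\mu_n$, the set $\{\lambda_n:n\in u\}$ is cofinal in $\mu$, and since $\chi<\mu$ I can pick an increasing $\langle\theta_m:m<\omega\rangle$ inside it with $\theta_0>\chi$ and $\sup_m\theta_m=\mu$, so that $\OBB_{\theta_m}(\bar C,I)$ holds for every $m$. (If some step below needs the $\theta_m$ below $\mu_n$ rather than merely below $\mu$, one repeats Lemma \ref{j.45} inside each $\lambda_n=\cf(2^{\mu_n})$, now using the $\aleph_1$-inaccessibility of $2^{\mu_n}$ and the ``no hole''/localization theorems of \cite[Ch.II,Ch.IX]{Sh:g} to obtain a pcf scale for $\lambda_n$ over a cofinal $a_n\subseteq\Reg\cap 2^{\mu_n}$ with governing ideal $J^{\bd}_{a_n}$; since then $\cf(2^{\mu_n})\ne\aleph_1$, Pos$(B)(\gamma)$ again applies.)

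It remains to build the black box. I would feed $\bar C$ — a strict tree-like $(\lambda,\aleph_1)$-ladder system on a stationary $S\in\check I[\lambda]$ — and the witnesses $\bar t^{\,m}=\langle t^m_\delta:\delta\in S\rangle$ of $\OBB_{\theta_m}(\bar C,I)$ for $m<\omega$ (with $\theta_m>\chi$, $\sup_m\theta_m=\mu$) into the passage from ordered to ordinary black boxes of \cite[\S0, Definition 0.5]{Sh:898}: an arbitrary structure of size $<\mu$ (in particular of size $\chi$) is coded into functions $\Dom(\bar C)\to\theta_m$, the $m$-th ordered black box returns an upper bound in $\theta_m$, and stitching the $\omega$ predictions together along the ladders — the $\otp(C_\delta)=\aleph_1$ coordinate giving the ``$\omega_1$'' and the $\omega$ stages giving the ``$*\,\omega$'' of the prediction ideal — produces a witness to $\BB(\lambda,\aleph_{\omega+1},\chi,J_{\omega_1*\omega})$, with $\aleph_{\omega+1}=(\sup_n\aleph_n)^+$. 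This proves (1); (2) is verbatim with $\aleph_0,\aleph_1$ replaced by $\sigma,\sigma^+$, hence $J_{\omega_1*\omega}$ by $J_{\sigma^+*\sigma}$.

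The hard part is making the pcf data cooperate with Lemma \ref{j.45}. That the single ladder system $\bar C$ is adequate for every application is automatic: \ref{j.45} leaves $\bar C$ and $I$ untouched and only contracts the target partial order, so tree-likeness and $\otp(C_\delta)=\aleph_1$ survive. The substantive point is to choose the pcf scales — for $\lambda$ over $\{\lambda_n\}$, and, where the second descent is needed, for each $\lambda_n=\cf(2^{\mu_n})$ over the regulars below $2^{\mu_n}$ — so that the governing ideals satisfy clause $(\gamma)$ of Pos$(B)$ (or, failing that, so that Pos$(D)$ or Pos$(E)$ applies); this is exactly where the $\aleph_1$-inaccessibility $(\forall\alpha<2^{\mu_n})(|\alpha|^{\aleph_1}<2^{\mu_n})$ and $\mu\in\bold C_{\aleph_0}$ enter, via the structural pcf theory of \cite{Sh:g}. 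The rest — the encoding and stitching of the third step — is routine bookkeeping given the apparatus of \cite{Sh:898}.
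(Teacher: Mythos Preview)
Your first two steps are essentially the paper's: pick a suitable ladder system on $S\subseteq S^\lambda_{\sigma^+}$, start from $\OBB_\lambda(\bar C)$ via \ref{j.44}(1), and descend with Lemma \ref{j.45} (Pos$(B)$, since $\kappa=\sigma^+>\sigma$) along a pcf scale for $\lambda$ over $\{\lambda_n\}$ to obtain $\OBB_{\lambda_n}(\bar C)$ for an unbounded set of $n$. Where you diverge from the paper, and where the argument breaks, is in the choice of $\bar C$ and in the last step.

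The parameters $\aleph_{\omega+1}$ and $J_{\omega_1*\omega}$ in $\BB(\lambda,\aleph_{\omega+1},\chi,J_{\omega_1*\omega})$ record the \emph{freeness} of the ladder system (see \cite[0.5]{Sh:898}); they do not arise from ``$\omega$ stages of stitching'' nor from the arithmetic identity $\aleph_{\omega+1}=(\sup_n\aleph_n)^+$. The paper therefore does not take $\bar C$ from \ref{j.45.3} but rather builds, via \cite[1.8]{Sh:898} (when $\lambda<2^\mu$, using a $\mu^+$-free $\cF\subseteq{}^\sigma\mu$) or via \cite[1.26]{Sh:898} (when $\lambda=2^\mu$), an $S$-ladder system that is $(\mu^+,J_{\sigma^+*\sigma})$-free, respectively $(\sigma^{+(\sigma+1)},J_{\sigma^+*\sigma})$-free. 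Your tree-like $\bar C$ from \ref{j.45.3} carries no such freeness, so even if the guessing worked you could not conclude $\BB$ with second entry $\aleph_{\omega+1}$.

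The second gap is the passage from OBB to BB itself. The paper does \emph{not} combine $\omega$ many ordered black boxes; it uses a \emph{single} $\OBB_{\lambda_i}(\bar C)$ with $\lambda_i=\cf(2^{\mu_i})$ and applies Theorem \ref{j71}: the hypotheses there are exactly $\chi=\cf(2^{\mu_i})$, the inaccessibility $(\forall\alpha<2^{\mu_i})(|\alpha|^{\sigma^+}<2^{\mu_i})$, and $\Sep(\mu_i,\chi)$ (the latter realized concretely in $(*)_3$ of the paper's proof). So the assumption $(\forall\alpha<2^{\mu_n})(|\alpha|^{\aleph_1}<2^{\mu_n})$ is used precisely here --- in clause (d) of \ref{j71} --- and not, as you suggest, in arranging pcf ideals for a second descent. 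Your ``encoding and stitching'' paragraph never invokes $\Sep$ and never uses $\lambda_n=\cf(2^{\mu_n})$ in any essential way, so as written it does not produce a black box.
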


\begin{remark}
\label{1f.55d}
1) On $J_{\omega_1 * \omega}$ see \cite[0.3=0p.6(3)]{Sh:898}.

\noindent
2)  If $\{\mu:(\forall \alpha < 2^\mu)(|\alpha|^{\aleph_1} < 2^\mu)\}$
 is uncountable the claim apply, i.e. its assumption holds for some
 $\langle (\mu_n,\lambda_n):n < \omega\rangle,\mu,\lambda$.
\end{remark} 

\begin{PROOF}{\ref{1f.55}}
1) By part (2) using $\sigma = \aleph_0$.

\noindent
2) Note
\mn
\begin{enumerate}
\item[$(*)_1$]   without loss of generality $\chi = \chi^{\sigma^+} <
\mu_i$ for every $i < \sigma$.
\end{enumerate}
\mn
[Why?  First $\mu \in \bold C_\sigma$, clearly $\sigma < \mu$ and
$\chi < \mu \Rightarrow 2^\chi < \mu \Rightarrow \chi^{\sigma^+} <
\mu$; so we can replace $\chi$ by $\chi^{\sigma^+}$.  Second, let
$i(*) = \text{ min}\{i:\mu_i > \chi\}$ and replace $\langle \mu_i:i
<\sigma\rangle$ by $\langle \mu_{i(*)+i}:i <\sigma\rangle$.]
\mn
\begin{enumerate}
\item[$(*)_2$]  let $\langle f^i_\alpha:\alpha < 2^{\mu_i}\rangle$
list ${}^{(\mu_i)}\chi$
\sn
\item[$(*)_3$]  choose a sequence $\bar g_i = \langle g^i_\beta:\beta
< 2^{\mu_i}\rangle$ of members of ${}^{\mu_i}({}^\sigma \chi)$ 
such that: if $\beta < \lambda_i$ and $\gamma_j < \beta$ for 
$j < \sigma$ \underline{then} for some $\varepsilon < \mu_i$ we
have $g'_\beta(\varepsilon) = \langle f^i_{\gamma_j}(\varepsilon):j <
\sigma\rangle$.
\end{enumerate}
\mn
[Why?  As $\alpha < 2^{\mu_i} \Rightarrow |\alpha|^\sigma <
2^{\mu_i}$ and $\chi = \chi^\sigma$ by renaming it suffices to
prove: if $\cF \subseteq {}^{(\mu_i)}\chi$ has cardinality $<
2^{\mu_i}$ then for some $g \in {}^{\mu_i}\chi$ we have $(\forall f
\in \cF)(\exists \varepsilon < \mu_i)(f(i) = g(i))$.

As $\mu_i \in \bold C_\sigma$ this is as in \cite{Sh:775} or
\S2 here; that is we can find $\bar f = \langle f_\varepsilon:\varepsilon <
\mu_i\rangle$ exemplifying {\rm Sep}$(\mu_i,\chi,(2^\chi)^+)$ which holds by
\cite[2.6=d.6(d)]{Sh:898}.
Now choose $g \in {}^{(\mu_i)} \chi \backslash \cup\{\Sol_\varrho:
\varrho \in \cF\}$, where {\rm Sol}$_\varrho :=
\{\nu \in \chi$ : if $\varepsilon < \mu_i$ then $\varrho(\varepsilon) =
f_\varepsilon(\nu)\}$ as in $(*)_0$ in the proof of \cite[2.6=d.6]{Sh:898}; 
there is such $g$ as $|\Sol_\varrho| < (2^\chi)^+$ 
for every $\varrho \in {}^{(\mu_i)}\chi$.]
\mn
\begin{enumerate}
\item[$(*)_4$]  without loss of generality $\lambda := 
\tcf(\prod\limits_{i < \sigma} \lambda_i,
<_{J^{\bd}_\sigma})$ is well defined.
\end{enumerate}
\mn
[Why?  By the pcf theorem there is an unbounded $u \subseteq \sigma$
such that $\tcf(\prod\limits_{i \in u} \lambda_i,<_{J^{\bd}_u})$
is well defined; now rename.]
\mn
\begin{enumerate}
\item[$(*)_5$]  if $\lambda < 2^\mu$ then we get the conclusion.
\end{enumerate}
\mn
[Why?  By Definition \cite[1.1=1.3.1]{Sh:898} and
  \cite[1.3=1.3.3(c)]{Sh:898} there is a
$\mu^+$-free $\cF \subseteq {}^\sigma \mu$ of cardinality $\lambda$.
Hence by \cite[1.8 = 1f.13(2A)]{Sh:898} for
any stationary $S \subseteq S^\lambda_{\sigma^+}$ we can
choose an $S$-ladder system $\langle C_\delta:\delta \in S\rangle$
which is $(\mu^+,J_{\sigma^+ * \sigma})$-free.
So by \ref{j.45}, $\OBB_{\lambda_i}(\bar C_1)$ holds for every
$i < \sigma$ large enough recalling Definition \ref{j.35}(2).  
By $(*)_3$ and Theorem \ref{j71} below we can get the
conclusion.]
\mn  
\begin{enumerate}
\item[$(*)_6$]  if $\lambda = 2^\mu$ then we get the conclusion.
\end{enumerate}
\mn
[Why?  Now by \cite[1.26=2b.111]{Sh:898} we can find stationary $S \subseteq
S^\lambda_{\sigma^+}$ and a $(\sigma^{+(\sigma +1)},
J_{\sigma^+ * \sigma})$-free $S$-ladder system.  
We finish similarly to $(*)_5$ but 
here we use $\alpha < 2^{\mu_i} \Rightarrow |\alpha|^{\sigma^+} < 2^{\mu_i}$.]
\end{PROOF}

\begin{theorem}
\label{j71}
1) We have $\BB(\lambda,\bar C,\theta,\kappa)$ \when \,:
\mn
\begin{enumerate}
\item[$(a)$]  $\OBB_\chi(\bar C,I),I$ is $\mu^+$-complete
\sn
\item[$(b)$]  $\kappa(\bar C) = \kappa$ 
\sn
\item[$(c)$]  $\chi = \cf(2^\mu)$ and $\theta < \mu$
\sn
\item[$(d)$]  $\alpha < 2^\mu \Rightarrow |\alpha|^\kappa < 2^\mu$
\sn
\item[$(e)$]  $\Sep(\mu,\theta)$, see Definition \cite[2.1=d.8(2)]{Sh:898}.
\end{enumerate}
\mn
2) Moreover in part (1) we get $\BB(\lambda,\bar C,(2^\mu,\theta),\kappa)$.

\noindent
3) Similarly replacing (d) by
\mn
\begin{enumerate}
\item[$(d)'$]  $\bar C$ is tree-like and $\alpha < 2^\mu \Rightarrow
|\alpha|^{<\kappa>_{\text{\rm tr}}} < 2^\mu$.
\end{enumerate}
\end{theorem}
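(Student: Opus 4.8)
The plan is to establish part~(2), i.e.\ $\BB(\lambda,\bar C,(2^\mu,\theta),\kappa)$ with $\lambda=\lambda(\bar C)$; part~(1) is a weakening of it, and part~(3) is the same argument with (d) replaced throughout by~(d)$'$. The idea is to convert the ordered black box $\OBB_\chi(\bar C,I)$ — which for an input function into $\chi=\cf(2^\mu)$ predicts only an \emph{upper bound} for each restriction to a $C_\delta$ — into a genuine black box, using $\Sep(\mu,\theta)$ to design the coding. Recall that an instance to be caught by $\BB(\lambda,\bar C,(2^\mu,\theta),\kappa)$ is, after the standard coding, a $\theta$-colouring $\mathbf c$ of a structure built from at most $2^\mu$ ``objects'', and that a trap is a sequence $\langle\bar c_\delta:\delta\in S\rangle$ with each $\bar c_\delta$ a $\theta$-colouring of $C_\delta$, the requirement being that $\bar c_\delta$ read off $\mathbf c$ correctly on some $C_\delta$. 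First I would fix an increasing continuous sequence $\langle\chi_\xi:\xi<\chi\rangle$ cofinal in $2^\mu$ — this is the only place where $\chi=\cf(2^\mu)$, rather than $2^\mu$ itself, is used — and fix $\bar f=\langle f_\varepsilon:\varepsilon<\mu\rangle$ witnessing $\Sep(\mu,\theta)$ together with its small $\Sol$-sets.

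The next step is to build a coding sequence, exactly as in the computation $(*)_3$ inside the proof of \ref{1f.55}. Using $\Sep(\mu,\theta)$ one shows: if $\cF$ is a family of fewer than $2^\mu$ ``local configurations'' — a configuration being a possible restriction of (the coded colouring corresponding to) an instance to a single $C_\delta$, of which, by hypothesis~(d) applied at $\alpha=\theta$ and the homogeneity $\otp(C_\delta)=\kappa$ (resp.\ by~(d)$'$, where the tree-likeness of $\bar C$ is precisely what keeps the count of branch-type configurations down to $|\alpha|^{\langle\kappa\rangle_{\rm tr}}<2^\mu$), there are at most $2^\mu$ in all — then there is a single function $g$ which, through $\bar f$, meets every member of $\cF$ at some coordinate $\varepsilon<\mu$, and indeed a single $\varepsilon$ can carry the information for all $\le|C_\delta|\le\kappa<\mu$ points of one $C_\delta$ simultaneously. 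Enumerating the configurations as $\langle c_\zeta:\zeta<2^\mu\rangle$, this yields $\bar g=\langle g_\xi:\xi<\chi\rangle$ with $g_\xi$ handling the first $\chi_\xi$ configurations at once. The trap is then defined from $\bar t$ (a witness for $\OBB_\chi(\bar C,I)$) and $\bar g$: $\bar c_\delta$ is the $\theta$-colouring of $C_\delta$ obtained by decoding $g_{t_\delta}$, through $\bar f$, on the coordinates attached to $C_\delta$.

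For the verification, given an instance $\mathbf c$ define $f_{\mathbf c}:\Dom(\bar C)\to\chi$ by letting $f_{\mathbf c}(\alpha)$ be the least $\xi<\chi$ for which the index, in the enumeration $\langle c_\zeta\rangle$, of the configuration that $\mathbf c$ induces ``at $\alpha$'' is $<\chi_\xi$. Applying $\OBB_\chi(\bar C,I)$ to $f_{\mathbf c}$ yields an $I$-positive set of $\delta\in S$ with $(\forall\alpha\in C_\delta)(f_{\mathbf c}(\alpha)\le t_\delta)$. For any such $\delta$, every configuration induced by $\mathbf c$ along $C_\delta$ has index $<\chi_{t_\delta}$, hence is among the first $\chi_{t_\delta}$, so by the defining property of $g_{t_\delta}$ there is a coordinate $\varepsilon<\mu$ at which $g_{t_\delta}$ records, through $\bar f$, enough to reconstruct $\mathbf c$ on all of $C_\delta$ at once — and then $\bar c_\delta$, being exactly that reconstruction, agrees with $\mathbf c$ on $C_\delta$. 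Finally, the $\mu^+$-completeness of $I$ in~(a) is used to amalgamate the at most $\mu$ sub-cases that occur in this verification (indexed by the decoding coordinate $\varepsilon<\mu$ and by the branch of the $\Sol$-sets responsible), so that the set of correctly-trapped $\delta$ differs from the $I$-positive set delivered by the $\OBB$ only by a set in $I$, hence is itself $I$-positive. This is the assertion $\BB(\lambda,\bar C,(2^\mu,\theta),\kappa)$.

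The step I expect to be the main obstacle is the decoding lemma: matching the single ordinal $t_\delta<\chi$ handed down by the ordered black box with a coding function $g_{t_\delta}$ that genuinely records, at a \emph{single} coordinate $\varepsilon<\mu$, the whole of $\mathbf c\rest C_\delta$ — i.e.\ making the two flavours of ``boundedness'' cooperate (the $\OBB$-bound $t_\delta$ on one side, the ``common coordinate'' output of $\Sep(\mu,\theta)$ on the other), while keeping the bookkeeping — only $2^\mu$ global configurations, $|C_\delta|<\mu$, $I$ being $\mu^+$-complete — uniform in $\delta$. Building $\bar g$ from $\Sep(\mu,\theta)$, and checking that $\mathbf c\mapsto f_{\mathbf c}$ loses no information, are the other places that need care; the remainder is routine, and part~(3) differs only in that the configuration count invokes~(d)$'$ and relies on $\bar C$ being tree-like.
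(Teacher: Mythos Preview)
Your proposal is correct and follows essentially the same approach as the paper: the paper's proof is the one-line instruction ``repeat the proof of \cite[1.10]{Sh:775} or \cite[2.2=d.6]{Sh:898} using $t_\delta$ instead of $\delta$ for $\delta\in S$'', and you have accurately unpacked precisely that substitution --- filtering ${}^\mu\theta$ by a cofinal $\langle\chi_\xi:\xi<\chi\rangle$ in $2^\mu$, building the coding functions $g_\xi$ via $\Sep(\mu,\theta)$ exactly as in $(*)_3$ of the proof of \ref{1f.55}, and then letting the trap at $\delta$ be read off from $g_{t_\delta}$ rather than from $g_\delta$. Your identification of where each hypothesis enters (in particular (d) versus (d)$'$ for the configuration count, and the $\mu^+$-completeness of $I$ for the final amalgamation over the $\le\mu$ decoding choices) matches the intended argument.
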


\begin{PROOF}{\ref{j71}}  
1) By (2).

\noindent
2),3)  Let $\bar t = \langle t_\delta:\delta \in S(\bar C) \rangle
\in S(\bar C)_\theta$ witness OBB$_\chi(\bar C)$.

We now repeat the proof of \cite[1.10]{Sh:775} or of
\cite[2.2=d.6]{Sh:898} here 
using $t_\delta$ instead of $\delta$ for $\delta \in S$.
\end{PROOF}

\begin{discussion}
\label{j74}
On \ref{j71}:  

\noindent
1) We use the obvious decomposition $\langle 
{\cF}_\alpha:\alpha < \chi\rangle$ of ${}^\mu \theta$.  There may be others.

\noindent
2) We may replace ``$\chi = \text{\rm cf}(2^\mu)$" by $\chi = \lambda
   = \text{\rm min}\{\lambda:2^\lambda > 2^\mu\}$ as in \S2.

\noindent
3) We may phrase the condition on $\bold F:{}^\mu \theta \rightarrow
   \Upsilon$ not only when $\Upsilon = \text{\rm cf}(\theta^\mu)$.

\noindent
4) Like (3) but for our specific problem: Hom$(G,\bbZ) = \{0\}$.
\end{discussion}
\bigskip

\centerline {$* \qquad * \qquad *$}
\bigskip

We look at another way to get cases of OBB.  Recall the definition of
$\gd^{< \sigma}_{\theta,\gd^\sigma_\theta}$ from Matet-Roslanowski-Shelah
\cite[1.1]{MRSh:799} which proves this number can; i.e. consistently, have
cofinality $\aleph_0$ and be $< \gd_\theta$; not used in the rest of
the paper.
\begin{definition}
\label{j.63}
Assume $\mu \ge \theta = \text{ cf}(\theta) \ge \sigma$ and $I$ an
   ideal on $\mu$ (so by our notation determine $\mu$).

\noindent
1) Let $\gd_{I,\theta,<\sigma}$ be
$\text{\rm min}\{|{\cF}|:{\cF} \subseteq {}^\mu \theta \text{
  has no } (I,\sigma)\text{-bound}\}$ where

\noindent
2) We say that $g \in {}^\mu \theta$ is a $(I,\sigma)$-bound of
${\cF} \subseteq {}^\mu \theta$ \when \, for any ${\cF}' \subseteq 
{\cF} \text{ of cardinality } < \sigma$ for $I^+$-many
$\varepsilon < \mu$ we have $(\forall f \in {\cF}')
(f(\varepsilon) < g(\varepsilon))$.

\noindent
3) Let $\gd^{\text{\rm inc}}_{I,\theta,\sigma}$ be defined similarly
but ${\cF}' = \{f_i:i < \sigma\}$ with the sequence $\langle f_i:i < 
\sigma\rangle$ being $<_{J^{\text{\rm bd}}_\kappa}$-increasing.

\noindent
4) Let $\gD_{I,\theta,\sigma}$ be the set of regular $\chi > \theta$
such that there is a $\subseteq$-increasing sequence
   $\langle {\cF}_\varepsilon:\varepsilon < \chi\rangle$ of subsets
of ${}^\mu \theta$ such that: ${\cF}_\varepsilon$ has a 
$(I,< \sigma)$-bound for $\varepsilon <  \chi$ and 
$\cup\{{\cF}_\varepsilon:\varepsilon < \chi\} = {}^\mu \theta$. 

\noindent
5) Let $\gD^{\text{\rm seq}}_{I,\theta,\sigma} = 
\text{\rm  min}\{|{\cF}|:{\cF} \subseteq \cup\{{}^\iota({}^\mu
   \theta):\iota < \sigma\}$ has no $(I,< \sigma)$-bound $g\}$ where

\noindent
6) $g$ is a $(I,<\sigma)$-bound of ${\cF} \subseteq
   \cup\{{}^\iota({}^\mu \theta):\iota < \sigma\}$ \when \, $g
 \in {}^\mu \theta$ and if $\bar g' = \langle g_\iota:\iota < \sigma
\rangle$ and $\iota < \sigma \Rightarrow \bar g' \rest \iota
   \in {\cF}$ then for $I^+$-many $\varepsilon \le \mu$ we have
$\iota < \sigma \Rightarrow g_\iota(\varepsilon) \le g(\varepsilon)$.

\noindent
7) If $I$ is the ideal $\{\emptyset\}$ on $\mu$ then we may write
   $\mu$ instead of $I$; omitting $\mu$ (and $I$) means $\mu
   = \theta \wedge I = J^{\text{\rm bd}}_\theta$.
\end{definition}

\begin{definition}
\label{j.64}
We define $\gd^{\text{\rm eq}}_{I,\theta,\sigma}$, etc. when $\le$ is
replaced by $=$.
\end{definition}

\begin{claim}
\label{j.67}
1) Assume $\theta > \kappa$ are regular cardinals and $\lambda = \text{\rm
   cf}(\gd_{\theta,\kappa}) > \theta$.  If $\bar C$ is a
$(\lambda,\kappa)$-ladder system \then \, {\rm OBB}$_\theta(\bar C)$
   (and $\lambda > \theta$).

\noindent
2) Assume $\theta > \kappa$ are regular and $\lambda = 
\text{\rm cf}(\gd^{\text{\rm seq}}_{\theta,\kappa}) > 
\theta$.  If $\bar C$ is a tree-like
$(\lambda,\kappa)$-ladder system \underline{then} {\rm OBB}$_\theta(\bar C)$.

\noindent
3) Assume $\theta > \kappa,\lambda \ge \chi$ are regular cardinals and
   $\chi \in \gD_{\theta,\kappa}$.  If {\rm OBB}$_\chi(\bar C,I),I$ is
   $\theta^+$-complete, $\lambda(\bar C) = \lambda$ 
and $\kappa(\bar C) = \kappa$ \then \, {\rm OBB}$_\theta(\bar C)$.

\noindent
4) If $\theta > \sigma$ are regular cardinals \then \,
{\rm cf}$(\gd_{\theta,\sigma}) > \theta \Rightarrow \text{\rm cf}
(\gd_{\theta,\sigma}) \in \gD_{\theta,\sigma}$ and
{\rm cf}$(\gd_{\theta,< \sigma}) 
\notin [\sigma,\theta]$ and {\rm cf}$(\gd^{\text{\rm
inc}}_{\theta,\sigma}) \notin \theta^+ \cap \text{\rm Reg} \backslash
 \{\sigma\}$.
\end{claim}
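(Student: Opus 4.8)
The plan is to treat the four parts as variations on a single engine: for parts (1) and (2), convert a cofinality statement about $\gd_{\theta,\kappa}$ (resp.\ $\gd^{\mathrm{seq}}_{\theta,\kappa}$) into the data $(\lambda,\bar C,I_*,J,\bar g)$ required by Lemma \ref{j.45}, with each $J_i$ taken to be a regular cardinal (the ``main case'' flagged before \ref{j.45}), and then invoke that lemma. For part (3) the work is already done: $\chi\in\gD_{\theta,\kappa}$ literally hands us the $\subseteq$-increasing filtration $\langle\cF_\varepsilon:\varepsilon<\chi\rangle$ of ${}^\mu\theta$ with $(I,{<}\kappa)$-bounds, which is exactly the shape of hypothesis we need to push $\OBB_\chi(\bar C,I)$ down to $\OBB_\theta(\bar C)$ via Lemma \ref{j.45} in its Pos(A)-flavour (the $\theta^+$-completeness of $I$ supplies clause (b), and $\theta>\kappa=\kappa(\bar C)$ controls the relevant completeness of the index ideal). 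Part (4) is a bookkeeping exercise connecting $\gd_{\theta,\sigma}$, $\gd_{\theta,<\sigma}$ and $\gd^{\mathrm{inc}}_{\theta,\sigma}$ to membership in $\gD_{\theta,\sigma}$, using the definitions in \ref{j.63} and standard cofinality/regularity arguments.

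For part (1): set $\lambda=\cf(\gd_{\theta,\kappa})$ and fix a $\subseteq$-increasing, cofinal-in-length sequence witnessing this cofinality, i.e.\ a family $\{f_\alpha:\alpha<\gd_{\theta,\kappa}\}\subseteq{}^\theta\kappa$ (here I take $\mu=\theta$, $I=J^{\mathrm{bd}}_\theta$ as in the default convention of \ref{j.63}(7)) witnessing that no subfamily of size $<\gd_{\theta,\kappa}$ is exhausted by a single $(I,\kappa)$-bound, reorganised along $\lambda$. The point of the definition of $\gd_{\theta,\kappa}$ is that initial segments of length $<\gd_{\theta,\kappa}$ \emph{do} have $(J^{\mathrm{bd}}_\theta,\kappa)$-bounds; take such bounds $g_\beta\in{}^\theta\theta$ for $\beta<\lambda$, arranged $\le_{J^{\mathrm{bd}}_\theta}$-increasing and cofinal, so that $J:=(\lambda,<)$, $i^*:=\theta$, $J_i:=(\theta,<)$ for $i<\theta$, $I_*:=J^{\mathrm{bd}}_\theta$, and $\bar g:=\langle g_\beta:\beta<\lambda\rangle$ satisfy clauses $(c)$–$(f)$ of \ref{j.45}. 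Clause $(a)$ — namely $\OBB_\lambda(\bar C,I)$ for $I$ the nonstationary ideal on $\lambda$ restricted to $S$ — holds by Observation \ref{j.44}(1), since $\bar C$ is a $(\lambda,\kappa)$-ladder system with $\lambda$ regular uncountable. The completeness clause $(b)$ asks $I$ to be $|i^*|^+=\theta^+$-complete: the nonstationary ideal on a regular $\lambda>\theta$ is $\lambda$-complete, hence $\theta^+$-complete. For clause $(g)$ we use Pos(A)$(\alpha)'$: since $\bar C$ is a $(\lambda,\kappa)$-ladder system, $|C_\delta|=\kappa$ (or $<\kappa$), and $I_*=J^{\mathrm{bd}}_\theta$ is $\theta$-complete hence $\kappa^+$-complete because $\kappa<\theta$. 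Lemma \ref{j.45} then yields $\cU\in I_*^+$ with $\OBB_{J_i}(\bar C,I)$ — i.e.\ $\OBB_\theta(\bar C,I)$ — for $i\in\cU$; since $I$ restricted to the stationary $S$ is still a proper ideal, $\OBB_\theta(\bar C)$ follows, and $\lambda>\theta$ is immediate from the hypothesis.

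For part (2) the only change is that the relevant bounds come from $\gd^{\mathrm{seq}}_{\theta,\kappa}$, which concerns $\kappa$-sequences of elements of ${}^\theta\theta$, and that $\bar C$ is now assumed tree-like; this moves us from Pos(A) to Pos(B) (or Pos(D)/Pos(E)) of Lemma \ref{j.45}, whose clauses $(\alpha),(\beta),(\gamma)$ are exactly tailored to tree-like ladder systems with $\kappa$-directed $J_i$: here $J_i=(\theta,<)$ is $\theta$-directed hence $\kappa$-directed since $\kappa<\theta$, and clause $(\gamma)$ — that $\theta\setminus\bigcup_{\varepsilon<\kappa}\cU_\varepsilon\notin J^{\mathrm{bd}}_\theta$ for $\subseteq$-increasing $\cU_\varepsilon\in J^{\mathrm{bd}}_\theta$ — holds because $\kappa<\theta=\cf(\theta)$, so the union of $\kappa$ bounded subsets of $\theta$ is bounded. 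The sequential flavour of $\gd^{\mathrm{seq}}$ is what feeds clause $(f)(\varepsilon)$/Pos(E)'s requirement that $\kappa$-sequences of functions be boundable in a coherent way.

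The main obstacle will be \emph{part (4)}, and within it the three separate assertions. The cleanest is $\cf(\gd_{\theta,\sigma})>\theta\Rightarrow\cf(\gd_{\theta,\sigma})\in\gD_{\theta,\sigma}$: writing $\gd_{\theta,\sigma}$ as an increasing union of length $\cf(\gd_{\theta,\sigma})$ of sets $\cF_\varepsilon\subseteq{}^\mu\theta$ of size $<\gd_{\theta,\sigma}$, each $\cF_\varepsilon$ by definition of $\gd_{\theta,\sigma}$ has a $(I,\sigma)$-bound, and the union is all of ${}^\mu\theta$ — which is precisely membership in $\gD_{\theta,\sigma}$ per \ref{j.63}(4), once one checks a $(I,\sigma)$-bound is a fortiori a $(I,{<}\sigma)$-bound (or adjusts indices; this is where care is needed). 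For $\cf(\gd_{\theta,<\sigma})\notin[\sigma,\theta]$ the argument is that a filtration of ${}^\mu\theta$ of length in $[\sigma,\theta]$ by sets each of which has an $(I,{<}\sigma)$-bound would, by a diagonalisation over $\le\theta<\cf$-many (hence boundedly many, using $\sigma\le\theta=\cf\theta$) stages, produce an $(I,{<}\sigma)$-bound for a set of size $\gd_{\theta,<\sigma}$, contradiction; one must verify the diagonalisation respects the ``$<\sigma$'' quantifier, which is the delicate point. The last clause, $\cf(\gd^{\mathrm{inc}}_{\theta,\sigma})\notin\theta^+\cap\mathrm{Reg}\setminus\{\sigma\}$, is proved similarly, exploiting that the witnessing families for $\gd^{\mathrm{inc}}$ are $<_{J^{\mathrm{bd}}_\kappa}$-increasing $\sigma$-sequences, so a short filtration can be amalgamated; the exclusion of $\sigma$ itself is what makes the statement non-vacuous and mirrors the analogous pcf phenomenon. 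Throughout part (4) I would lean on the elementary observation that the union of $<\theta$ many members of $J^{\mathrm{bd}}_\theta$ is again bounded, which is the combinatorial heart of every one of these exclusions.
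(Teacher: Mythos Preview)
Your reduction of parts (1)--(3) to Lemma \ref{j.45} does not go through. The obstacle is clause $(f)(\delta)$ there: Lemma \ref{j.45} requires $\{g_t:t\in J\}$ to be \emph{cofinal} in $(\prod_{i<i^*}J_i,\le_{I_*})$, where $g'\le_{I_*}g''$ means $\{i:g'(i)\le g''(i)\}=i^*\bmod I_*$. But the bounds supplied by $\gD_{\theta,\kappa}$ (and by $\gd_{\theta,\kappa}$) are only $(I,{<}\kappa)$-bounds in the sense of \ref{j.63}(2): they dominate each ${<}\kappa$-subfamily on an $I^+$-set, not on a co-$I$-set. With $I_*=J^{\mathrm{bd}}_\theta$ this is exactly the gap between ``unbounded'' and ``co-bounded'', and there is no reason your $g_\beta$'s should be $\le_{J^{\mathrm{bd}}_\theta}$-cofinal in ${}^\theta\theta$; you simply assert ``arranged \ldots\ cofinal'' without justification. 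The paper's proof of part (3) is accordingly not an invocation of \ref{j.45} but a bespoke argument with the same skeleton: one still sets $t^i_\delta=g_{t_\delta}(i)$ and, assuming no $\bar t_i$ works, takes counterexamples $f_i$; but instead of using $(f)(\delta)$ to bound each $h_\alpha=\langle f_i(\alpha):i<\theta\rangle$ by some $g_{s_\alpha}$ mod $I_*$, one sets $s_\alpha=\min\{\varepsilon:h_\alpha\in\cF_\varepsilon\}$ (using that $\bigcup_\varepsilon\cF_\varepsilon={}^\theta\theta$), applies $\OBB_\chi$ to $\alpha\mapsto s_\alpha$ to find $\delta(*)$ with $\{h_\alpha:\alpha\in C_{\delta(*)}\}\subseteq\cF_{t_{\delta(*)}}$, and then invokes the $({<}\kappa)$-bound property of $g_{t_{\delta(*)}}$ on this whole ${<}\kappa$-sized set \emph{at once} to obtain an unbounded set of $i$'s where $g_{t_{\delta(*)}}(i)$ simultaneously dominates every $h_\alpha(i)$. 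Parts (1) and (2) then follow from (3), (4) and Observation \ref{j.44}(1), not from a direct call to \ref{j.45}.

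Your sketch of the first assertion in part (4) also skips a necessary step. Filtering a witnessing family $\cF\subseteq{}^\theta\theta$ of size $\gd_{\theta,\sigma}$ as $\bigcup_{\varepsilon<\chi}\cF_\varepsilon$ with $|\cF_\varepsilon|<\gd_{\theta,\sigma}$ does give sets each admitting a bound $g_\varepsilon$, but their union is $\cF$, not ${}^\theta\theta$, and Definition \ref{j.63}(4) requires the filtration to exhaust ${}^\theta\theta$. The paper repairs this by enlarging each $\cF_\varepsilon$ to $\cF^+_\varepsilon$, the set of $g$'s eventually dominated by the sup of some ${<}\sigma$-subset of $\cF_\varepsilon$; then $\bigcup_\varepsilon\cF^+_\varepsilon={}^\theta\theta$ precisely because no $h\in{}^\theta\theta$ is a $({<}\sigma)$-bound of $\cF$, so some ${<}\sigma$-subfamily of $\cF$ witnesses this and places $h$ in the corresponding $\cF^+_\varepsilon$. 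Your diagonalisation for the second assertion of (4) is essentially the paper's argument: take a common $<_{J^{\mathrm{bd}}_\theta}$-upper bound of the ${\le}\theta$ many $g_\varepsilon$'s and check it is a $({<}\sigma)$-bound of $\cF$.
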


\begin{remark}  1) Assume $\theta_0 = 2^{\aleph_0} = \text{\rm
   cf}(2^{\aleph_0}),\theta_{n+1} = \gd_{\theta_n,\aleph_1}$.  Then we
   can use part (3) \underline{but} $\mu = \sup\{\theta_n:n <
   \omega\}$ is not necessarily strong limit.  We can consider
   $\langle \theta_i:i < \omega_1\rangle$ getting $\kappa(\bar C) =
   \aleph_1$.  Do we get $\{\delta:\text{pp}(\sum \limits_{i < \delta}
   \theta_i)=\text{ cov} (\sum\limits_{i < \delta}
   \theta_i,\aleph_1,\aleph_1,2)\} \in {\cD}_{\omega_1}$?

\noindent
2) By \cite{MRSh:799} we know that it is consistent with ZFC that:
   there is $\cA \subseteq \text{ NS}_\theta =\{A \subseteq \theta:A$
   not stationary$\}$ of cardinality $< \text{ cf}(\text{NS}_\theta)$
   such that every $A \in \text{ NS}_\theta$ is included in the union
   of $< \theta$ of them.
\end{remark}

\begin{PROOF}{\ref{j.67}}
1) By \ref{j.44}(1) we have OBB$_\lambda(\bar C)$, by part
   (4), {\rm cf}$(\gd_{\theta,\kappa}) \in \gD_{\theta,\kappa}$ and
by part (3) with $\chi := \lambda$ we deduce OBB$_\theta(\bar C)$ as
promised.

\noindent
2) Similarly.

\noindent
3) Clearly
we can find $\bar{\cF}$ such that:
\mn
\begin{enumerate}
\item[$(*)_2$]  $(a) \quad \bar{\cF} = \langle 
{\cF}_\varepsilon:\alpha \le \chi\rangle$ is $\subseteq$-increasing
continuous
\sn
\item[${{}}$]  $(b) \quad \bar g = \langle g_\varepsilon:\varepsilon <
\chi \rangle,g_\varepsilon \in {}^\theta \theta$
\sn
\item[${{}}$]  $(c) \quad g_\varepsilon$ is a $(< \kappa)$-bound of
${\cF}_\varepsilon$ for $\alpha < \lambda$
\sn
\item[${{}}$]  $(d) \quad {\cF}_\chi = {}^\theta \theta$.
\end{enumerate}
\mn
As we are assuing OBB$_\chi(\bar C)$ let $\bar t$ be such that:
\mn
\begin{enumerate}
\item[$(*)_3$]  $\bar t = \langle t_\alpha:\alpha < \lambda\rangle \in
{}^\lambda \chi$ is a witness for OBB$_\chi(\bar C)$.
\end{enumerate}
\mn
For each $i < \theta$ let:
\mn
\begin{enumerate}
\item[$(*)_4$]  $\bar t_i = \langle t^i_\delta:\delta \in S\rangle \in
{}^\lambda \theta$ be $t^i_\delta = g_{t_\delta}(i)$.
\end{enumerate}
\mn
Let
\mn
\begin{enumerate}
\item[$(*)_5$]  ${\cU}_1 := \{i < \theta:\bar t_i$ is not a witness
for OBB$_\theta(\bar C)\}$.
\end{enumerate}
\mn
For each $i < \theta$ let $(f_i,E_i)$ be such that
\mn
\begin{enumerate}
\item[$(*)_6$]  $(a) \quad f_i:\lambda \rightarrow \theta$
\sn
\item[${{}}$]   $(b) \quad E_i = S$ mod $I$
\sn
\item[${{}}$]   $(c) \quad$ if $i \in {\cU}_1$ and $\delta \in E_i$
then $(\exists \alpha \in C_\delta)(f_i(\alpha) \ge t^i_\delta)$ that
is

\hskip25pt  $(\exists \alpha \in C_\delta)(f_i(\alpha) \ge g_{t_\delta}(i))$.
\end{enumerate}
\mn
Let $E = \cap\{E_i:i < \theta\}$ hence $S \backslash E \in I$.  
For each $\alpha < \lambda$ let
$h_\alpha:\theta \rightarrow \theta$ be $h_\alpha(i) = f_i(\alpha)$
and $s_\alpha = \text{ min}\{\varepsilon < \chi:h_\alpha \in 
{\cF}_\varepsilon\}$ so $\alpha \mapsto s_\alpha$ is a function from
$\lambda$ to $\theta$.  By the choice of $\bar t$ we have

\[
{\cW} := \{\delta \in S:(\forall \alpha \in C_\delta)(s_\alpha \le
t_\delta)\} \ne \emptyset \text{ mod } I.
\]

\mn
So we can choose $\delta(*) \in {\cW} \cap E$.  Now
$\{h_\alpha:\alpha \in C_{\delta(*)}\} \subseteq {\cF}_{\delta(*)}$
hence by the choice of g$_{t_{\delta(*)}}$ we have
\mn
\begin{enumerate}
\item[$(*)_7$]  ${\cU}_2 = \{i < \theta:(\forall \alpha \in
C_\delta)(h_\alpha(i) \le g_{t_\delta}(i))$ equivalently $(\forall
\alpha \in C_\delta)(f_i(\alpha) \le t^i_\delta)\} \ne \emptyset$ mod
$J^{\text{\rm bd}}_\theta$.
\end{enumerate}
\mn
Now
\mn
\begin{enumerate}
\item[$(*)_8$]  ${\cU}_1 \cap {\cU}_2 = \emptyset$
\end{enumerate}
\mn
hence
\mn
\begin{enumerate}
\item[$(*)_9$]  ${\cU}_1 \ne \theta$ mod $J^{\text{\rm bd}}_\theta$
\end{enumerate}
\mn
hence
\mn
\begin{enumerate}
\item[$(*)_{10}$]  $\theta \backslash 
{\cU}_1 \ne \emptyset$, i.e. for some $i < \theta,\bar t_i$
witness OBB$_\theta(\bar C)$.
\end{enumerate}
\mn
This is enough.

\noindent
4) First, we have to show that $\chi := \text{\rm cf}(\gd_{\theta,\sigma})
\in \gD_{\theta,\kappa}$ if cf$(\gd_{\theta,\kappa}) > \theta$.

So let ${\cF} \subseteq {}^\theta \theta$ be of cardinality
$\gd_{\theta,\kappa}$ such that no $g \in {}^\theta \theta$ is a $(\le
\kappa)$-bound of ${\cF}$.  Let $\langle {\cF}_\varepsilon:
\varepsilon < \chi\rangle$ be $\subseteq$-increasing
with union ${\cF}$ such that $\varepsilon < \chi \Rightarrow 
|{\cF}_\varepsilon| < |{\cF}| = \gd_{\theta,\kappa}$.  Let 
${\cF}^+_\varepsilon = \{g \in {}^\theta \theta$: for some ${\cF}'
\subseteq {\cF}$ of cardinality $< \sigma$ and $i_* < \theta$ we have $i_*
< i < \theta \Rightarrow g(i) = \sup\{f(i):f \in {\cF}'\}\}$.
\mn
\begin{enumerate}
\item[$\odot_1$]   $\langle {\cF}^+_\varepsilon:\varepsilon <
\chi\rangle$ is $\subseteq$-increasing.
\end{enumerate}
\mn
Now we shall prove
\mn
\begin{enumerate}
\item[$\odot_2$]   $\cup\{{\cF}^+_\varepsilon:\varepsilon < \chi\}
= {}^\theta \theta$.
\end{enumerate}
\mn
Let $h \in {}^\theta \theta$, by the choice of ${\cF}$ the function
$h$ is not a $(< \sigma)$-bound of ${\cF}$ hence there is
${\cF}' \subseteq {\cF}$ of cardinality $< \sigma$ witnessing it
which means that for every large enough $i < \theta,
h(i) \le \sup\{f(i):f \in {\cF}'\}$.  
Let ${\cF}' = \{f_j:j < j_* < \sigma\}$, let
$\varepsilon_h(j) = \text{ min}\{\varepsilon:f_j \in 
{\cF}_\varepsilon\}$ and let $\varepsilon_h := 
\sup\{\varepsilon_h(j):j < j_*\} < \chi$ so clearly 
$h \in {\cF}^+_{\varepsilon_h}$ hence $\odot_2$ is proved.

Lastly, obviously
\mn
\begin{enumerate}
\item[$\odot_3$]   $g_\varepsilon$ is a $(< \sigma)$-bound of 
${\cF}^+_\varepsilon$.
\end{enumerate}
\mn
Together we have shown that $\chi \in \gD_{\theta,\sigma}$.  Second, 
why $\chi = \text{ cf}(\gd_{\theta,\sigma}) \notin [\sigma,\theta]$?

Let $\langle {\cF}_\varepsilon:\varepsilon < \chi\rangle,\langle
g_\varepsilon:\varepsilon < \chi\rangle$ be as above.
 Let $g \in {}^\theta \theta$ be a common $<_{J^{\text{\rm
bd}}_\theta}$-upper bound of $\{g_\varepsilon:\varepsilon < \chi\}$,
exist as $\chi \le \theta$
\mn
\begin{enumerate}
\item[$\boxplus$]  $g$ is a $(< \sigma)$-bound of ${\cF}$.
\end{enumerate}
\mn
[Why?  If ${\cF}' \subseteq {\cF}$ has cardinality $< \sigma$
then ${\cF}' \subseteq {\cF}_\varepsilon$ for some $\varepsilon
< \chi$, so $\{i:(\forall f \in {\cF}')(f(i) \le g_\varepsilon(i)\}
\ne \emptyset$ mod $J$ but $g_\varepsilon \le_{J^{\text{\rm
bd}}_\theta} g$, so $g$ indeed is a $(< \sigma)$-bound of ${\cF}$.]

But $\boxplus$ contradicts the choice of ${\cF}$.
\end{PROOF}

\begin{remark}
\label{j68}
1) We may combine OBB and the results of \S2 (or \cite{Sh:775}),
   e.g. see also  \ref{j.49} below.

\noindent
2) Like \ref{j.67} for $\gd^{\text{\rm eq}}_{\theta,\sigma}$,
etc. (connection to {\rm Sep}).
\end{remark}

\centerline {$* \qquad * \qquad *$}

\noindent
We look at relatives of OBB, though we shall not use them.

\noindent
Other variants are
\begin{definition}
\label{k38}
1)  OBB$^0_J(\bar C,I)$ is defined as in \ref{j.35}(1) but we 
demand only $\{\delta \in S:\delta = \sup\{\alpha \in C_\delta:
f(\alpha) \le_J t_\delta\}\} \ne \emptyset$ mod $I$.

\noindent
1A) Assume otp$(C_\delta) = \xi$ for every $\delta \in S$ and $J_*$ is an
ideal on $\xi$.  We define OBB$^0_{J,J_*}(\bar C,I)$ as in Definition
\ref{j.35}, but we demand only that $\{\delta \in S:\text{ otp}(\alpha
\cap C_\delta):\alpha \in C_\delta$ and $f(\alpha) \le_J t_\delta\}
\ne \emptyset$ mod $J_*\} \ne \emptyset$ mod $I$.
\newline
2) Let OBB$^1_J(\bar C,I)$ mean OBB$_J(\bar C,I)$.

\noindent
2A) Under the assumption of part (1A), we define OBB$^0_{J,J_*}(\bar
C,I)$ as in Definition \ref{j.35} but we demand $\{\delta \in
S:\{\text{otp}(\alpha \cap C_\delta):\alpha \in C_\delta$ and
$f(\alpha) \le_J t_\delta\} = \xi$ mod $J_*\} \ne \emptyset$ mod $I$.

\noindent
3) OBB$^2_J(\bar C,I)$ when
\mn
\begin{enumerate}
\item[$(a)$]  $J$ is a partial order
\sn
\item[$(b)$]  $I$ is an ideal on $S = S(\bar C)$
\sn
\item[$(c)$]  there is $\bar t$ which is a witness for OBB$^2_J(\bar C,I)$
which means:
\sn
\begin{enumerate}
\item[$(\alpha)$]   $\bar t = \langle t_\delta:\delta \in
S\rangle$
\sn
\item[$(\beta)$]    $t_\delta \in J$
\sn
\item[$(\gamma)$]   if $f:\cup\{C_\delta:\delta \in S\}
\rightarrow J$, \underline{then} the set $\{\delta \in S:(\forall
\alpha \in C_\delta)(t_\delta \nleq_J f(\alpha))\}$ belongs
to $I^+$.
\end{enumerate}
\end{enumerate}
\mn
4) OBB$^3_J(\bar C,I)$ is defined similarly replacing $(c)(\gamma)$ by
\mn
\begin{enumerate}
\item[$(c)(\gamma)'$]  if $f:\cup\{C_\delta:\delta \in S\} \rightarrow
J$ \underline{then} the set $\{\delta \in S:(\forall \alpha \in
C_\delta)(t_\delta \nleq_J f(\alpha)\} = S(\bar C)$ mod $I$.
\end{enumerate}
\mn
5) OBB$^4_J(\bar C,I)$ is OBB$^+_J(\bar C,I)$, i.e. 
is defined as in \ref{j.35}(1) but in the end 
$\{\delta \in S:(\forall \alpha \in C_\delta)(f(\alpha) \le_J
 t_\delta)\} = S$ mod $I$.
\end{definition}

\begin{discussion}
\label{k40}
An example of \ref{k42}(1A): $S = S^\lambda_{\aleph_2},J_* =
J_{\aleph_2 + \aleph_1}$ and we use a parallel of \ref{j.45}.
\end{discussion}

\begin{claim}
\label{k42}
0) Assume $J$ is a partial order, $\bar C = \langle C_\delta:\delta
 \in S\rangle,C_\delta \subseteq \delta,\otp(C_\delta) =
 \kappa,\kappa$ a cardinal (or ordinal), $J_*$ an ideal on $\xi,I$
   an ideal on $S$.

\noindent
1) If {\rm OBB}$^0_{J,J_*}(\bar C,I)$ and $J$ is
   $(2^{|\kappa|})^+$-directed, $I$ is $(2^{|\kappa|})^+$-complete
   \then\, for some $A \subseteq \xi,A \ne \emptyset$ {\rm mod} $J_*$
   we have {\rm OBB}$^1_{J,J_*}(\bar C,\bar I)$.

\noindent
2) For some $\cU \in I^+_*$ we have $i \in \cU \Rightarrow \text{\rm
   OBB}^0_{J_i,J_*}(\bar C,I)$ \when \, below (a)-(f) below hold where
\mn
\begin{enumerate}
\item[$(a)$]  {\rm OBB}$^0_{J,J_*}(\bar C,\bar I),S = S(\bar C)$, {\rm
otp}$(C_\delta) = \kappa$ for $\delta \in S$
\sn
\item[$(b)-(e)$]  as in \ref{j.45}
\sn
\item[$(f)$]  $(\alpha) \quad J_i$ is $\sigma$-directed
\sn
\item[${{}}$]  $(\beta) \quad \bar{\cU} = \langle \cU_\alpha:\alpha
\in \text{\rm Dom}(\bar C)\rangle,\cU_\alpha \subseteq
\alpha,|\cU_\alpha| < \sigma$
\sn
\item[${{}}$]  $(\gamma) \quad$ if $\delta \in S,\bar{\cU} = \langle
\cU_\alpha:\alpha \in C_\delta \rangle,\cU_\alpha \in I^+_*$ for
$\alpha \in C_\delta$ and

\hskip25pt  $\alpha \in C_\delta \wedge \beta \in C_\delta \wedge 
\alpha \in \cU_\beta \Rightarrow \cU_\beta \subseteq \cU_\alpha$
\then \, $\cap\{\cU_\alpha:\alpha \in C_\delta\} \in I^+_*$.
\end{enumerate}
\mn
3) Like part (2) but
\mn
\begin{enumerate}
\item[$(a)^+$]  {\rm OBB}$^1_{J.J_*}(\bar C,I),S = S(\bar C)$, {\rm
otp}$(C_\delta) = \kappa$ for $\delta \in S$
\sn
\item[$(f)^-$]  like $(f)$ but in $(\gamma)$ we hvae $\cU_\alpha =
i^*$ {\rm mod} $I_*$.
\end{enumerate}
\end{claim}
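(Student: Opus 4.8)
Since (0) records only the blanket hypotheses, there is nothing to prove there; I describe how I would handle (1), and then (2)--(3), which run parallel to Lemma~\ref{j.45}.

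\emph{Part (1).} The plan is to show that a single witness $\bar t^0$ of $\OBB^0_{J,J_*}(\bar C,I)$ already witnesses $\OBB^1$ once we localise to a suitable $J_*$-positive $A$; that is, I read the conclusion as: for some $A\subseteq\xi$ with $A\notin J_*$, the very same $\bar t^0$ witnesses $\OBB^1_{J,\,J_*+(\xi\setminus A)}(\bar C,I)$. For $f\colon\Dom(\bar C)\to J$ and $\delta\in S$ write $A_{f,\delta}:=\{\otp(\alpha\cap C_\delta):\alpha\in C_\delta,\ f(\alpha)\le_J t^0_\delta\}$, so that $\OBB^0$ says $\{\delta:A_{f,\delta}\notin J_*\}\in I^+$ for every $f$, while what I want is an $A$ with $\{\delta:A\setminus A_{f,\delta}\in J_*\}\in I^+$ for every $f$. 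Suppose no such $A$ exists. Then for each $A\notin J_*$ pick a counterexample $f_A$, i.e. $\{\delta:A\setminus A_{f_A,\delta}\in J_*\}\in I$. There are at most $2^{|\kappa|}$ such $A$, so by $(2^{|\kappa|})^+$-directedness of $J$ there is $f^*\colon\Dom(\bar C)\to J$ with $f_A(\alpha)\le_J f^*(\alpha)$ for all $A\notin J_*$ and all $\alpha$. Since $f^*(\alpha)\le_J t^0_\delta$ forces $f_A(\alpha)\le_J t^0_\delta$, we get $A_{f^*,\delta}\subseteq A_{f_A,\delta}$, hence $N_A:=\{\delta:A\setminus A_{f^*,\delta}\in J_*\}\in I$ for every $A\notin J_*$. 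Now $(2^{|\kappa|})^+$-completeness of $I$ gives $N:=\bigcup\{N_A:A\subseteq\xi,\ A\notin J_*\}\in I$, whereas $\OBB^0$ applied to $f^*$ gives $W:=\{\delta:A_{f^*,\delta}\notin J_*\}\in I^+$; picking $\delta\in W\setminus N$ and setting $A:=A_{f^*,\delta}$ (which lies outside $J_*$ since $\delta\in W$), we get $\delta\notin N_A$, i.e. $A\setminus A_{f^*,\delta}\notin J_*$ --- absurd, as $A=A_{f^*,\delta}$. So the required $A$ exists.

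\emph{Parts (2) and (3).} Here I would transcribe the proof of Lemma~\ref{j.45}, case Pos$(B)$/Pos$(D)$, with the auxiliary family $\bar{\cU}=\langle\cU_\alpha:\alpha\in\Dom(\bar C)\rangle$ playing the role the tree-like structure of $\bar C$ played there. Let $\bar t$ witness $\OBB^0_{J,J_*}(\bar C,I)$ (for (3), $\OBB^1_{J,J_*}$), put $\bar s^i:=\langle g_{t_\delta}(i):\delta\in S\rangle$ and $\cU_0:=\{i<i^*:\bar s^i\text{ witnesses }\OBB^0_{J_i,J_*}(\bar C,I)\}$; as in \ref{j.45} it suffices to show $\cU_0\notin I_*$. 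Assuming otherwise, for $i\in i^*\setminus\cU_0$ fix $f_i\colon\Dom(\bar C)\to J_i$ exemplifying the failure, with exceptional set $W^i:=\{\delta\in S:\{\otp(\alpha\cap C_\delta):\alpha\in C_\delta,\ f_i(\alpha)\le_{J_i}g_{t_\delta}(i)\}\notin J_*\}\in I$, and note $W:=\bigcup\{W^i:i\notin\cU_0\}\in I$ by clause (b). Build $h_\alpha\in\prod_{i<i^*}J_i$ with $f_i(\alpha)\le_{J_i}h_\alpha(i)$ and, using $|\cU_\alpha|<\sigma$ and $\sigma$-directedness of each $J_i$, with $h_\alpha(i)$ a common $\le_{J_i}$-upper bound of $\{f_i(\alpha)\}\cup\{h_\beta(i):\beta\in\cU_\alpha\}$; then choose $s_\alpha\in J$ with $h_\alpha\le_{I_*}g_{s_\alpha}$ via clause $(f)(\delta)$. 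Applying $\OBB^0_{J,J_*}$ to $\alpha\mapsto s_\alpha$ and removing the $I$-set $W$, pick $\delta(*)\notin W$ such that the set $G$ of ``good'' $\alpha\in C_{\delta(*)}$ (those with $s_\alpha\le_J t_{\delta(*)}$) has trace $\{\otp(\alpha\cap C_{\delta(*)}):\alpha\in G\}\notin J_*$. For $\alpha\in G$ we have $h_\alpha\le_{I_*}g_{s_\alpha}\le_{I_*}g_{t_{\delta(*)}}$ by clauses $(f)(\delta),(f)(\varepsilon)$, so $\cW_\alpha:=\{i<i^*:h_\alpha(i)\le_{J_i}g_{t_{\delta(*)}}(i)\}=i^*\bmod I_*$; and since $\beta\in\cU_\alpha$ gives $h_\beta(i)\le_{J_i}h_\alpha(i)$, the family $\langle\cW_\alpha:\alpha\in G\rangle$ meets the coherence hypothesis of clause $(f)(\gamma)$, applied to the trace-positive sub-ladder $G$, so $\cU_*:=\bigcap\{\cW_\alpha:\alpha\in G\}\in I_*^+$. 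Finally any $i\in\cU_*\setminus\cU_0$ contradicts the choice of $f_i$, exactly as after $(*)_{10}$ in \ref{j.45}: from $i\in\cW_\alpha$ and $f_i(\alpha)\le_{J_i}h_\alpha(i)$ we get $f_i(\alpha)\le_{J_i}g_{t_{\delta(*)}}(i)$ for every $\alpha\in G$, so the trace of $\{\alpha\in C_{\delta(*)}:f_i(\alpha)\le_{J_i}g_{t_{\delta(*)}}(i)\}$ contains that of $G$ and is therefore $J_*$-positive, contradicting $\delta(*)\notin W^i$. Thus $\cU_0\notin I_*$. Part (3) is the same argument with $\OBB^1$ (``$=\xi\bmod J_*$'') in place of $\OBB^0$ throughout, the stronger clause $(f)^-$ (whose $(\gamma)$-part yields $\cU_*=i^*\bmod I_*$) being what keeps the argument closing.

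\emph{Where the difficulty lies.} In (1) the essential point is that the positive set $A$ cannot be named before the counterexamples are exhibited: the remedy is to dominate all $2^{|\kappa|}$ functions $f_A$ by a single $f^*$ and to union their $I$-small exceptional sets into one $I$-small $N$, and these two moves spend precisely the $(2^{|\kappa|})^+$-directedness of $J$ and the $(2^{|\kappa|})^+$-completeness of $I$, which is why those hypotheses are shaped as they are. In (2)--(3) the delicate step is that in the $\OBB^0$/$\OBB^1$ setting only a $J_*$-positive trace-worth of $\alpha\in C_{\delta(*)}$ is ``good'' (unlike in \ref{j.45}, where $\delta(*)\in W_*$ made all of $C_{\delta(*)}$ good), so clause $(f)(\gamma)$ has to be invoked for that sub-ladder $G$ rather than for all of $C_{\delta(*)}$; the coherence the clause needs on $G$ is exactly what the $\bar{\cU}$-recursion builds into the $h_\alpha$, which is why the $J_i$ are assumed $\sigma$-directed and $|\cU_\alpha|<\sigma$. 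Once these two points are in hand, the rest is a line-by-line copy of the proof of Lemma~\ref{j.45}.
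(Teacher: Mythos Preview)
Your approach is exactly what the paper intends: its entire proof is the single line ``Like \ref{j.45}'', and your write-up for parts (2)--(3) is precisely a transcription of the Pos(B)/Pos(D) argument of \ref{j.45} with the $\bar\cU$-structure from clause $(f)(\beta)$ substituted for the tree-like hypothesis; your argument for part (1) is the natural reduction that the directedness/completeness bounds are calibrated for, and it is correct. One point to watch: clause $(f)(\gamma)$ as literally stated quantifies over all $\alpha\in C_\delta$, whereas in the $\OBB^0$ case you only control the $J_*$-positive sub-ladder $G$; you flag this yourself, and it is the right reading of the (somewhat garbled) clause, but when you write it up you should make explicit that $(\gamma)$ is being applied with the index set $G$ rather than $C_{\delta(*)}$ --- in part (3) this issue disappears, since $\OBB^1_{J,J_*}$ gives $G=C_{\delta(*)}$ mod $J_*$ and $(f)^-$ is then exactly what is needed.
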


\begin{proof}
Like \ref{j.45}.
\end{proof}

\begin{claim}
\label{k44}
1) If $J$ is linearly ordered, then 
{\rm OBB}$^2_J(\bar C,I) \Rightarrow \text{\rm OBB}^1_J(\bar C,I)$.
\newline
2) {\rm OBB}$^1_J(\bar C,\bar I) \Rightarrow 
\text{\rm OBB}^0_J(\bar C,\bar I)$ and
{\rm OBB}$^1_J(\bar C,\bar I) \Rightarrow 
\text{\rm OBB}^2_J(\bar C,\bar I)$ and
{\rm OBB}$^3_J(\bar C,\bar I) \Rightarrow \text{\rm OBB}^2_J(\bar C,\bar I)$.
\end{claim}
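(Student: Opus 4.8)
The plan is to prove all four implications by exhibiting, in each case, the \emph{same} predicting sequence $\bar t$ that witnesses the hypothesis as a witness of the conclusion --- in the single case ${\rm OBB}^1 \Rightarrow {\rm OBB}^2$ after first replacing the test function by a pointwise strict majorant. Then every implication reduces to a one-line inclusion of the relevant ``good $\delta$'' subsets of $S = S(\bar C)$, together with the trivial remarks that $I$ is proper (so $S \notin I$) and that $C_\delta$ is cofinal in $\delta$.

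For part (1): let $\bar t = \langle t_\delta : \delta \in S\rangle$ witness ${\rm OBB}^2_J(\bar C,I)$ and fix $f : \bigcup_{\delta \in S} C_\delta \to J$. Since $J$ is linearly ordered, for $\alpha \in C_\delta$ the condition $t_\delta \nleq_J f(\alpha)$ is equivalent to $f(\alpha) <_J t_\delta$, which in particular gives $f(\alpha) \le_J t_\delta$. Hence $\{\delta \in S : (\forall \alpha \in C_\delta)(t_\delta \nleq_J f(\alpha))\} \subseteq \{\delta \in S : (\forall \alpha \in C_\delta)(f(\alpha) \le_J t_\delta)\}$; the left-hand side is in $I^+$ by the choice of $\bar t$, so the right-hand side is in $I^+$, and as $f$ was arbitrary this says $\bar t$ witnesses ${\rm OBB}^1_J(\bar C,I)$.

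For part (2), two of the three implications are equally immediate. For ${\rm OBB}^1_J(\bar C,I) \Rightarrow {\rm OBB}^0_J(\bar C,I)$: if $\bar t$ witnesses ${\rm OBB}^1$ and $\delta$ satisfies $(\forall \alpha \in C_\delta)(f(\alpha) \le_J t_\delta)$, then $\{\alpha \in C_\delta : f(\alpha) \le_J t_\delta\} = C_\delta$, whose supremum is $\delta$ (using $\delta = \sup(C_\delta)$); thus the set that ${\rm OBB}^1$ makes large for $f$ is contained in the set that ${\rm OBB}^0$ asks to be $\ne \emptyset \mod I$ for $f$, giving ${\rm OBB}^0_J(\bar C,I)$. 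For ${\rm OBB}^3_J(\bar C,I) \Rightarrow {\rm OBB}^2_J(\bar C,I)$: a witness $\bar t$ for ${\rm OBB}^3$ makes, for every $f$, the set $\{\delta \in S : (\forall \alpha \in C_\delta)(t_\delta \nleq_J f(\alpha))\}$ equal to $S \mod I$, hence --- since $I$ is proper --- an element of $I^+$, which is precisely what ${\rm OBB}^2$ demands.

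The remaining implication ${\rm OBB}^1_J(\bar C,I) \Rightarrow {\rm OBB}^2_J(\bar C,I)$ is where the small modification enters, and where the only genuine hypothesis is used. Let $\bar t$ witness ${\rm OBB}^1$ and fix $f$; using that $J$ has no maximal element, i.e. $J \models (\forall s)(\exists t)(s <_J t)$ --- the standing assumption on the orders $J$ in this section --- choose $f'(\alpha)$ to be a strict $\le_J$-upper bound of $f(\alpha)$. Applying ${\rm OBB}^1$ to $f'$: for $I^+$-many $\delta \in S$ and every $\alpha \in C_\delta$ we then have $f(\alpha) <_J f'(\alpha) \le_J t_\delta$, hence $f(\alpha) <_J t_\delta$, hence $t_\delta \nleq_J f(\alpha)$ (otherwise $t_\delta \le_J f(\alpha) <_J t_\delta$), so the same $\bar t$ witnesses ${\rm OBB}^2_J(\bar C,I)$. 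I expect no real obstacle; the one point deserving attention is exactly this step, since for an arbitrary partial order $J$ (say a singleton, with $C_\delta \ne \emptyset$) the implication ${\rm OBB}^1 \Rightarrow {\rm OBB}^2$ fails, so one must genuinely invoke the existence of strict upper bounds in $J$. Everything else is formal unwinding of Definition \ref{k38}.
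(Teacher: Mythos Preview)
The paper states Claim~\ref{k44} without proof (there is no proof environment between it and the following Claim~\ref{k47}), so there is nothing to compare against; your argument is the natural one and is correct. Your observation about ${\rm OBB}^1_J \Rightarrow {\rm OBB}^2_J$ is well taken: the implication genuinely requires that $J$ have no maximal element, and while this condition appears explicitly as clause~(d) in Lemma~\ref{j.45} and Claim~\ref{j.46}, it is not literally a standing assumption of the section, so it would be more accurate to say that the implication holds under this mild (and evidently intended) hypothesis on $J$ rather than to call it a standing assumption.
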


\begin{claim}
\label{k47}
Assume that $J_*$ is a $\sigma$-directed
partial order.
\newline1) If $J_* = J^{\text{\rm bd}}_\sigma,\theta = \text{\rm cf}(\theta) >
2^\sigma,\sigma = \sigma^{< \kappa}$ and {\rm OBB}$^3_\theta(\bar
C,I)$, \underline{then} {\rm OBB}$^2_{J_*}(\bar C,I)$.
\newline2) We can use any $\kappa$-directed partial order $J_*$ of cardinality
$\sigma$.
\end{claim}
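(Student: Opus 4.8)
The plan is to split the claimed implication into two steps: first a reduction of the directed partial order $J_*$ to the linear order $\sigma$, using only the order-theoretic shape of $J_*$; then the derivation of $\OBB^2_\sigma(\bar C,I)$ from the hypothesis $\OBB^3_\theta(\bar C,I)$, which is where $\theta>2^\sigma$ and $\sigma=\sigma^{<\kappa}$ are needed. One uses throughout that $\OBB^2$ passes up from $I$-positive restrictions while $\OBB^3$ restricts down to $I$-positive subsets, so either step may freely pass to $I$-positive (even co-$I$) sets.

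\emph{First step: $\OBB^2_\sigma(\bar C,I)\Rightarrow\OBB^2_{J_*}(\bar C,I)$.} Let $\langle\rho_\delta:\delta\in S\rangle$ with $\rho_\delta<\sigma$ witness $\OBB^2_\sigma(\bar C,I)$. In case~(1), where $J_*=J^{\bd}_\sigma$ (ordered by $\subseteq$), set $t'_\delta:=[0,\rho_\delta)\in J^{\bd}_\sigma$; for $f':\Dom(\bar C)\to J^{\bd}_\sigma$ put $\gamma(\alpha):=\min(\sigma\setminus f'(\alpha))$, which is $<\sigma$ since $\sigma$ is regular and $f'(\alpha)$ is bounded, so that $[0,\rho_\delta)\subseteq f'(\alpha)$ iff $\rho_\delta\le\gamma(\alpha)$; hence $\{\delta\in S:(\forall\alpha\in C_\delta)(t'_\delta\not\subseteq f'(\alpha))\}=\{\delta\in S:(\forall\alpha\in C_\delta)(\rho_\delta>\gamma(\alpha))\}\in I^+$ by the choice of $\langle\rho_\delta\rangle$. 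In case~(2), where $|J_*|=\sigma$, fix an enumeration $J_*=\{s_\varepsilon:\varepsilon<\sigma\}$ which is a linear extension of $\le_{J_*}$, set $t'_\delta:=s_{\rho_\delta}$, and for $f'$ let $\gamma(\alpha)$ be the index of $f'(\alpha)$; since $s_{\rho_\delta}\le_{J_*}s_{\gamma(\alpha)}$ forces $\rho_\delta\le\gamma(\alpha)$, we again get $\{\delta:(\forall\alpha\in C_\delta)(t'_\delta\not\le_{J_*}f'(\alpha))\}\supseteq\{\delta:(\forall\alpha\in C_\delta)(\rho_\delta>\gamma(\alpha))\}\in I^+$. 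In both cases $\langle t'_\delta\rangle$ witnesses $\OBB^2_{J_*}(\bar C,I)$.

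\emph{Second step: $\OBB^3_\theta(\bar C,I)\Rightarrow\OBB^2_\sigma(\bar C,I)$ --- the crux.} Let $\bar t=\langle t_\delta:\delta\in S\rangle$, $t_\delta<\theta$, witness $\OBB^3_\theta(\bar C,I)$; feeding it constant functions shows $\{\delta\in S:t_\delta>\beta\}=S\bmod I$ for every $\beta<\theta$, so we may restrict to a co-$I$ set on which the levels $t_\delta$ are as large as wanted. The intention is to take $\rho_\delta$ to be a ``low part'' of $t_\delta$ --- e.g.\ its remainder under ordinal division by $\sigma$ --- and, given $\gamma:\Dom(\bar C)\to\sigma$, to apply $\OBB^3_\theta$ to a function $g:\Dom(\bar C)\to\theta$ whose value at $\alpha$ combines $\gamma(\alpha)$ with a high offset large enough that ``$t_\delta>\sup_{\alpha\in C_\delta}g(\alpha)$'' forces ``$\rho_\delta>\gamma(\alpha)$ for all $\alpha\in C_\delta$'' on the relevant $\delta$; $\OBB^3_\theta$ then hands back a co-$I$ set of such $\delta$, which is exactly what $\OBB^2_\sigma(\bar C,I)$ demands. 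The one real difficulty --- and this is where $\theta>2^\sigma$ and $\sigma=\sigma^{<\kappa}$ are used --- is to keep $g$ honestly $\theta$-valued: the high offsets must register, below $\theta$, the $\le\sigma^{<\kappa}=\sigma$ possible local shapes of $\gamma$ on a block (there are that few since $|C_\delta|<\kappa$), and doing the bookkeeping so that the per-$\delta$ implication survives --- if need be after one further passage to an $I$-positive set, in the manner of the proofs of \ref{j.45} and \ref{1f.55} --- is essentially all of the work. Part~(2) is obtained identically, case~(2) of the first step supplying the reduction to $\OBB^2_\sigma$.
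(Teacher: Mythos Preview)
The paper states Claim~\ref{k47} without supplying a proof, so there is no argument in the paper to compare your proposal against.

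On its own merits, your proposal has two genuine gaps. In the first step, case~(2), you assert that one can enumerate $J_*=\{s_\varepsilon:\varepsilon<\sigma\}$ so that the enumeration order is a linear extension of $\le_{J_*}$ (i.e., $s_\varepsilon\le_{J_*}s_{\varepsilon'}$ forces $\varepsilon\le\varepsilon'$). This fails already for $J_*=(\bbZ,\le)$ with $\sigma=\aleph_0$: any such enumeration would order-embed $\bbZ$ into $\omega$. The directedness hypotheses do not exclude such $J_*$. (A repair is available: use the $\sigma$-directedness from the preamble to build a strictly $<_{J_*}$-increasing cofinal chain $\langle r_\varepsilon:\varepsilon<\sigma\rangle$, set $t'_\delta=r_{\rho_\delta}$, and for $f'$ put $\gamma(\alpha)=\min\{\varepsilon:f'(\alpha)\le_{J_*}r_\varepsilon\}$; then $\rho_\delta>\gamma(\alpha)$ gives $t'_\delta>_{J_*}f'(\alpha)$, hence $t'_\delta\nleq_{J_*}f'(\alpha)$.)

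More seriously, your second step is not a proof. You yourself write that arranging the bookkeeping ``is essentially all of the work,'' and that work is absent. The specific idea --- take $\rho_\delta$ to be $t_\delta\bmod\sigma$ and encode $\gamma$ into a $\theta$-valued $g$ via a ``high offset'' --- does not succeed as stated: for $t_\delta>g(\alpha)$ to force $\rho_\delta>\gamma(\alpha)$ you need the quotient parts of $t_\delta$ and $g(\alpha)$ to coincide, and you provide no mechanism tying the \emph{fixed} sequence $\langle t_\delta\rangle$ to the $\gamma$-dependent offsets. The hypotheses $\theta>2^\sigma$ and $\sigma=\sigma^{<\kappa}$ are invoked only heuristically, never in a concrete construction. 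As written this is a plan for a proof, not a proof.
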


\begin{claim}
\label{j.49}
In \ref{j.46}(2), assume we are given
$\langle(J^\xi,\bar J^\varepsilon,\bar g^\xi):\xi < \xi(*)\rangle$
such that
\mn
\begin{enumerate}
\item[$(\alpha)$]  $J^\xi,\bar J^\xi = \langle J^\xi_i:i <
i(\xi)\rangle,\bar g^\xi = \langle g^\xi_t:t \in J\rangle,
\bar C,\kappa,I$ are as in \ref{j.46}(2) for each $\xi < \xi(*)$
 \item[$(\beta)$]  $\alpha < \theta \Rightarrow |\alpha|^{|\xi(*)|}
< \theta$.
\end{enumerate}
\mn
\underline{Then} we can find a $\theta$-complete ideal 
$I' \supseteq I$ such that
for each $\xi$, for some $j_\xi < i(\xi)$, {\rm OBB}$^3_{J^\xi_{j_\xi}}(\bar
C,I')$ holds.
\end{claim}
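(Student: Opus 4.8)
The plan is to reduce Claim~\ref{j.49} to $\xi(*)$ separate applications of \ref{j.46}(2) and then merge the resulting ideals into one $\theta$-complete ideal, in the spirit of the way Observation~\ref{j.44}(2) packages a family of ``failure sets'' into a single complete ideal. For each $\xi<\xi(*)$, applying \ref{j.46}(2) to $(J^\xi,\bar J^\xi,\bar g^\xi,\bar C,\kappa,I)$ and its ideal $I^\xi_*$ on $i(\xi)$ gives $\{i<i(\xi):\OBB_{J^\xi_i}(\bar C,I)\}=i(\xi)\bmod I^\xi_*$; fix $j_\xi$ in this set and a witness $\bar t^\xi=\langle t^\xi_\delta:\delta\in S\rangle$ for $\OBB_{J^\xi_{j_\xi}}(\bar C,I)$, which by the proof of \ref{j.46} may be taken with $t^\xi_\delta=g^\xi_{r^\xi_\delta}(j_\xi)$ for some $\bar r^\xi$ witnessing $\OBB^+_{J^\xi}(\bar C,I)$ (clause (a) of \ref{j.46}). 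Using clause (d) of \ref{j.46} pick $u^\xi_\delta\in J^\xi_{j_\xi}$ with $t^\xi_\delta<_{J^\xi_{j_\xi}}u^\xi_\delta$; since $s<t$ forces $t\nleq s$ in any partial order, for every $f:\Dom(\bar C)\to J^\xi_{j_\xi}$ the set $G^\xi_f:=\{\delta\in S:(\forall\alpha\in C_\delta)(u^\xi_\delta\nleq_{J^\xi_{j_\xi}}f(\alpha))\}$ contains $\{\delta\in S:(\forall\alpha\in C_\delta)(f(\alpha)\le_{J^\xi_{j_\xi}}t^\xi_\delta)\}$ and is therefore $\ne\emptyset\bmod I$.

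Now let $I'$ be the least ideal on $S$ containing $I$ and all the sets $S\setminus G^\xi_f$, closed under unions of fewer than $\theta$ members: thus $X\in I'$ iff $X\subseteq Y\cup\bigcup_{\zeta<\zeta^*}(S\setminus G^{\xi_\zeta}_{f_\zeta})$ for some $Y\in I$, $\zeta^*<\theta$, and data $\langle(\xi_\zeta,f_\zeta):\zeta<\zeta^*\rangle$. Then $I\subseteq I'$, and provided $I'$ is a proper $\theta$-complete ideal, $\bar u^\xi$ witnesses $\OBB^3_{J^\xi_{j_\xi}}(\bar C,I')$ for each $\xi$ because $S\setminus G^\xi_f\in I'$ for all $f$. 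That $I'$ is $\theta$-complete is routine: a union of $<\theta$ members of $I'$ is again of the displayed form (merge the data), and hypothesis $(\beta)$---which in particular yields $|\xi(*)|<\theta$ and closure of $\theta$ under the relevant products---keeps the combined index sequence of length $<\theta$, while $I$ itself is $\theta$-complete (being $(2^{|i(\xi)|})^+$-complete for each $\xi$).

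The heart of the matter is that $I'$ is proper, i.e.\ $S\notin I'$; equivalently, for every $\zeta^*<\theta$ and data $\langle(\xi_\zeta,f_\zeta):\zeta<\zeta^*\rangle$ one must have $\bigcap_{\zeta<\zeta^*}G^{\xi_\zeta}_{f_\zeta}\notin I$, and by the containment above it suffices to exhibit an $I$-positive (in fact $I$-co-null) set of $\delta\in S$ with $(\forall\zeta<\zeta^*)(\forall\alpha\in C_\delta)(f_\zeta(\alpha)\le_{J^{\xi_\zeta}_{j_{\xi_\zeta}}}t^{\xi_\zeta}_\delta)$---a ``$<\theta$-simultaneous'' strengthening of the individual $\OBB$'s from the first step. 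This I would prove by re-running the argument of \ref{j.46} over the product $J^{**}:=\prod_{\xi\in u}J^\xi$ of the relevant top orders, where $u:=\{\xi_\zeta:\zeta<\zeta^*\}$ has $|u|<\theta$: since each $\bar r^\xi$ witnesses $\OBB^+_{J^\xi}(\bar C,I)$ and $I$ is $\theta$-complete, $\langle\langle r^\xi_\delta:\xi\in u\rangle:\delta\in S\rangle$ witnesses $\OBB^+_{J^{**}}(\bar C,I)$; one then lifts each $f_\zeta$ coordinate-wise through the cofinality of $\bar g^{\xi_\zeta}$ in $\prod_i J^{\xi_\zeta}_i$, intersects the resulting co-$I$ bound set with the $\OBB^+_{J^{**}}$ set, and runs the $\kappa$-completeness argument of the proof of \ref{j.46}(2) over the fewer than $\kappa$ many $\alpha\in C_\delta$, invoking clause (g) of \ref{j.46}(1) for the bounds.

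I expect the main obstacle to be exactly this last step, and specifically the tension between the \emph{fixed} choice of $j_\xi$ and the fact that clause $(f)(\delta)$ of \ref{j.46} gives domination only \emph{modulo} $I^\xi_*$, so that the \ref{j.46}-style argument certifies a co-$I^\xi_*$ set of usable coordinates rather than the single coordinate $j_\xi$. As in \ref{j.46}(2), the remedy is to argue in contrapositive form: assume $\bigcap_\zeta G^{\xi_\zeta}_{f_\zeta}\in I$, pass to one $\delta$ in the co-$I$ intersection of the bound set and the $\OBB^+_{J^{**}}$ set, observe that for this $\delta$ and each $\xi\in u$ the set of coordinates on which the lifts of the $f_\zeta$ are dominated is $i(\xi)\bmod I^\xi_*$ and hence meets $i(\xi)\setminus\cU^\xi$ (the good set of \ref{j.46}(2)), and arrange the coordinate-wise lifts so that $j_\xi$---which enters only through $\bar u^\xi$ and the identity $t^\xi_\delta=g^\xi_{r^\xi_\delta}(j_\xi)$---is itself forced into the relevant co-$I^\xi_*$ sets, producing the desired contradiction with $\OBB_{J^\xi_{j_\xi}}(\bar C,I)$. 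It is in controlling this coordinate bookkeeping uniformly over $\xi\in u$ that $(\beta)$ is genuinely needed.
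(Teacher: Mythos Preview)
The paper's proof of this claim is the single phrase ``Like \ref{j.46}'', so there is no detailed argument to compare against. Your overall architecture---apply \ref{j.46}(2) to each $\xi$ separately to fix $j_\xi$ and a witness, then generate $I'$ $\theta$-completely from $I$ together with the OBB$^3$-failure sets in the style of \ref{j.44}(2), and finally verify that $I'$ is proper via a simultaneous \ref{j.46}-type computation---is a plausible way to unpack that hint.

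The obstacle you isolate in your final paragraph is, however, a genuine gap, and the remedy you sketch does not close it. Once $j_\xi$ is fixed, showing $S\notin I'$ amounts to proving, for an arbitrary batch $\langle(\xi_\zeta,f_\zeta):\zeta<\zeta^*\rangle$ with $\zeta^*<\theta$, that the set of $\delta$ with $(\forall\zeta)(\forall\alpha\in C_\delta)\,f_\zeta(\alpha)\le_{J^{\xi_\zeta}_{j_{\xi_\zeta}}}t^{\xi_\zeta}_\delta$ is $I$-positive. Your plan is to ``lift each $f_\zeta$ coordinate-wise through the cofinality of $\bar g^{\xi_\zeta}$'', but $f_\zeta(\alpha)$ lives in the \emph{single} coordinate $J^{\xi_\zeta}_{j_{\xi_\zeta}}$, whereas clauses $(f)$ and $(g)$ of \ref{j.46} only assert cofinality of $\{g^{\xi}_t\restriction A:t\in J^\xi\}$ in the full product $(\prod_i J^\xi_i,\le_{I^\xi_*\restriction A})$; there is no device for converting a one-coordinate datum into an element of $J^\xi$. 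Consequently, when you pass to a particular $\delta$ and run the \ref{j.46} computation, what you obtain is a co-$I^{\xi}_*$ set of good coordinates depending on that $\delta$ and on the batch, with nothing tying the pre-committed $j_\xi$ to it. The sentence ``arrange the coordinate-wise lifts so that $j_\xi$ \dots\ is itself forced into the relevant co-$I^\xi_*$ sets'' is exactly where a substantive idea is needed, and none is supplied. A reading of ``Like \ref{j.46}'' closer to the paper's intent would run the \emph{proof} of \ref{j.46} (not merely its statement) for all $\xi$ in parallel, letting the coordinates $j_\xi$ emerge only after $I'$ has been fixed.
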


\begin{PROOF}{\ref{j.49}}
Like \ref{j.46}.  
\end{PROOF}

\begin{observation}
\label{j.45t}
{\rm OBB}$^\ell_{J_2}(\bar C,I)$ holds \underline{when}
\mn
\begin{enumerate}
\item[$(a)$]  $J_1,J_2$ are partial orders
\sn
\item[$(b)$]  {\rm OBB}$^\ell_{J_1}(\bar C,I)$
\sn
\item[$(c)$]  $f:J_1 \rightarrow J_2$
\sn
\item[$(d)$]  for every $t^* \in J_2$, for some $s^* \in J_1$, we
have $(\forall s \in J_1)(h(s) <_{J_2} t^* \Rightarrow s <_{J_2} s^*)$.
\end{enumerate}
\end{observation}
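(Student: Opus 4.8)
The plan is to verify $\OBB^\ell_{J_2}(\bar C,I)$ by exhibiting a witness built directly from the witness supplied by clause~(b). Write $h\colon J_1\to J_2$ for the map of clauses~(c)--(d) (so that $h(s)\in J_2$ for $s\in J_1$, which is the only parsing under which clause~(d) is meaningful), put $S=S(\bar C)$, and let $\bar t^{1}=\langle t^1_\delta:\delta\in S\rangle$ witness $\OBB^\ell_{J_1}(\bar C,I)$. I claim that $\bar t^{2}:=\langle h(t^1_\delta):\delta\in S\rangle$ witnesses $\OBB^\ell_{J_2}(\bar C,I)$.

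To check this, fix an arbitrary $g\colon\bigcup_{\delta\in S}C_\delta\to J_2$. Using clause~(d), for each $\alpha\in\bigcup_{\delta\in S}C_\delta$ choose $s_\alpha\in J_1$ by applying~(d) to $g(\alpha)$ (or, for $\ell\in\{2,3\}$, to a strict $\le_{J_2}$-successor of $g(\alpha)$; see below), so that $(\forall s\in J_1)\big(h(s)<_{J_2}g(\alpha)\Rightarrow s<_{J_1}s_\alpha\big)$, and set $f(\alpha):=s_\alpha$; then $f\colon\bigcup_{\delta\in S}C_\delta\to J_1$. Apply $\OBB^\ell_{J_1}(\bar C,I)$ to $f$ with the witness $\bar t^{1}$; this yields the large set of $\delta$'s prescribed by $\ell$ --- for instance, when $\ell\in\{2,3\}$, the set $W:=\{\delta\in S:(\forall\alpha\in C_\delta)(t^1_\delta\nleq_{J_1}f(\alpha))\}$, which belongs to $I^+$ (respectively equals $S\bmod I$), and the analogous ``$\le$''-set for $\ell\in\{0,1,4\}$. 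It now suffices to show that this same set also witnesses the $\OBB^\ell_{J_2}$-prediction for $g$ relative to $\bar t^{2}$: for $\delta\in W$ and $\alpha\in C_\delta$ we have $t^1_\delta\nleq_{J_1}s_\alpha$, hence $t^1_\delta\not<_{J_1}s_\alpha$, and so by the contrapositive of the choice of $s_\alpha$ we get $h(t^1_\delta)\not<_{J_2}g(\alpha)$, which --- once the ``$<$ versus $\le$'' issue below is dealt with --- gives $t^2_\delta=h(t^1_\delta)\nleq_{J_2}g(\alpha)$. Since $g$ was arbitrary, $\OBB^\ell_{J_2}(\bar C,I)$ follows.

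The step that carries the actual content, and the only place I foresee friction, is passing from ``$\not<_{J_2}$'' to the relation demanded by the definition of $\OBB^\ell$. For $\ell\in\{2,3\}$ the only gap is the coincidence $h(t^1_\delta)=g(\alpha)$; it is excluded by having chosen $s_\alpha$ from clause~(d) applied to a strict $\le_{J_2}$-successor $g'(\alpha)$ of $g(\alpha)$ rather than to $g(\alpha)$ itself --- such successors exist under the same mild non-maximality assumption on $J_2$ that already appears in the paper (cf.\ clause~(d) of Lemma~\ref{j.45}) --- since then $h(t^1_\delta)\le_{J_2}g(\alpha)$ would yield $h(t^1_\delta)<_{J_2}g'(\alpha)$, forcing $t^1_\delta<_{J_1}s_\alpha$, contrary to $\delta\in W$. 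The order-ascending cases $\ell\in\{0,1,4\}$, whose prediction for $g$ reads ``$g(\alpha)\le_{J_2}t^2_\delta$ for cofinally many (resp.\ all) $\alpha\in C_\delta$'', run through the identical scheme with clause~(d) read in the matching orientation, so that $t^1_\delta\ge_{J_1}s_\alpha$ forces $h(t^1_\delta)\ge_{J_2}g(\alpha)$ (most transparently when $J_2$ is linearly ordered, where ``$\not<_{J_2}$'' is literally ``$\ge_{J_2}$''); the construction of $f$ from~(d), the appeal to clause~(b), and the pullback of the large set are otherwise uniform in $\ell$. Thus the whole argument reduces to this routine, if fiddly, strict/non-strict bookkeeping.
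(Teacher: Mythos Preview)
Your argument for $\ell\in\{2,3\}$ is correct (granted the mild extra hypothesis you flag, that every element of $J_2$ has a strict $\le_{J_2}$-successor), and this is exactly the range of $\ell$ the paper's own proof targets --- note the line ``So $\ell=2\Rightarrow W\in I^+$, and $\ell\ 3\Rightarrow W=S(\bar C)\bmod I$''. The paper's write-up is badly garbled (the map is called $f$ in clause~(c) but $h$ in (d) and in the proof; there are stray $J_1/J_2$ swaps; $W$ is written with $\le$ where $\nleq$ is meant; the final displayed implication has $h(s)\le_{J_1}s_\delta$ where $s\le_{J_1}s_\delta$ is intended), but once one repairs these typos your push-forward-the-witness, pull-back-the-test-via-(d) scheme is what is meant.

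Your treatment of $\ell\in\{0,1,4\}$, however, is not a proof. From clause~(d) the contrapositive gives only $h(t^1_\delta)\not<_{J_2}g(\alpha)$, and in a non-linear $J_2$ this does \emph{not} yield $g(\alpha)\le_{J_2}h(t^1_\delta)$; the phrase ``clause~(d) read in the matching orientation'' is doing work it cannot do. In fact hypothesis~(d) is genuinely too weak for these $\ell$: if $J_2$ is an infinite antichain then (d) is vacuous for every $h\colon J_1\to J_2$, while $\OBB^1_{J_2}(\bar C,I)$ fails outright (in an antichain $g(\alpha)\le_{J_2}t^2_\delta$ forces $g(\alpha)=t^2_\delta$, so any $g$ that is non-constant on each $C_\delta$ defeats every candidate witness). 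Thus for $\ell\in\{0,1,4\}$ one must either add linearity of $J_2$ --- which you rightly isolate as the transparent case --- or replace~(d) by a hypothesis pointing the other way (e.g.\ $h$ order-preserving with $\le_{J_2}$-cofinal range). The paper's proof does not attempt these $\ell$ either, so the observation should be read as covering only $\ell\in\{2,3\}$.
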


\begin{PROOF}{\ref{j.45t}}
Let $\bar t = \langle t_\delta:\delta \in S(\bar C)\rangle$ 
exemplifies OBB$^\ell_{J_1}(\bar C,I)$.

For each $\delta \in S(\bar C)$ let $s_\delta \in J_1$ be such that
$t_\delta \le_J s_\delta$.  It is enough to show that 
$\bar s = \langle s_\delta:\delta \in S(\bar C)\rangle$ 
exemplifies OBB$^\ell_{J_2}(\bar C,I)$.  So assume $f_1:
\text{Dom}(\bar C) \rightarrow J_1$ and let us define $f_2:
\text{Dom}(\bar C) \rightarrow J_2$ by $f_2(\alpha) h(f_1(\alpha))$.

Let $W = \{\delta \in S:(\forall \alpha \in C_\delta)(f_2(\alpha)
\le_{J_2} t_\delta)\}$. So $\ell =2 \Rightarrow W \in I^+$, and 
$\ell 3 \Rightarrow W = S(\bar C)$ mod $I$.  Hence it suffices to show that
$(\forall \alpha \in C_\delta)[f_1(\alpha) \le_{J_1} s_\delta]$, and
hence it is enough to prove:

\[
(\forall s \in J_1)(h(s) \le_{J_2} t_\delta \Rightarrow h(s) \le_{J_1}
s_\delta).
\]
\end{PROOF}

\def\germ{\frak} \def\scr{\cal} \ifx\documentclass\undefinedcs
  \def\bf{\fam\bffam\tenbf}\def\rm{\fam0\tenrm}\fi 
  \def\defaultdefine#1#2{\expandafter\ifx\csname#1\endcsname\relax
  \expandafter\def\csname#1\endcsname{#2}\fi} \defaultdefine{Bbb}{\bf}
  \defaultdefine{frak}{\bf} \defaultdefine{=}{\B} 
  \defaultdefine{mathfrak}{\frak} \defaultdefine{mathbb}{\bf}
  \defaultdefine{mathcal}{\cal}
  \defaultdefine{beth}{BETH}\defaultdefine{cal}{\bf} \def\bbfI{{\Bbb I}}
  \def\mbox{\hbox} \def\text{\hbox} \def\om{\omega} \def\Cal#1{{\bf #1}}
  \def\pcf{pcf} \defaultdefine{cf}{cf} \defaultdefine{reals}{{\Bbb R}}
  \defaultdefine{real}{{\Bbb R}} \def\restriction{{|}} \def\club{CLUB}
  \def\w{\omega} \def\exist{\exists} \def\se{{\germ se}} \def\bb{{\bf b}}
  \def\equivalence{\equiv} \let\lt< \let\gt>
  \def\implies{\Rightarrow}\def\mathfrak{\bf}\def\germ{\frak} \def\scr{\cal}
  \ifx\documentclass\undefinedcs
  \def\bf{\fam\bffam\tenbf}\def\rm{\fam0\tenrm}\fi 
  \def\defaultdefine#1#2{\expandafter\ifx\csname#1\endcsname\relax
  \expandafter\def\csname#1\endcsname{#2}\fi} \defaultdefine{Bbb}{\bf}
  \defaultdefine{frak}{\bf} \defaultdefine{=}{\B} 
  \defaultdefine{mathfrak}{\frak} \defaultdefine{mathbb}{\bf}
  \defaultdefine{mathcal}{\cal}
  \defaultdefine{beth}{BETH}\defaultdefine{cal}{\bf} \def\bbfI{{\Bbb I}}
  \def\mbox{\hbox} \def\text{\hbox} \def\om{\omega} \def\Cal#1{{\bf #1}}
  \def\pcf{pcf} \defaultdefine{cf}{cf} \defaultdefine{reals}{{\Bbb R}}
  \defaultdefine{real}{{\Bbb R}} \def\restriction{{|}} \def\club{CLUB}
  \def\w{\omega} \def\exist{\exists} \def\se{{\germ se}} \def\bb{{\bf b}}
  \def\equivalence{\equiv} \let\lt< \let\gt>
\providecommand{\bysame}{\leavevmode\hbox to3em{\hrulefill}\thinspace}
\providecommand{\MR}{\relax\ifhmode\unskip\space\fi MR }
\providecommand{\MRhref}[2]{%
  \href{http://www.ams.org/mathscinet-getitem?mr=#1}{#2}
}
\providecommand{\href}[2]{#2}


\end{document}